\newcommand{\func}[1]{\operatorname{#1}}
\newtheorem{theorem}{Theorem}
\newtheorem{definition}[theorem]{Definition}
\newtheorem{lemma}[theorem]{Lemma}
\newtheorem{proposition}[theorem]{Proposition}
\newtheorem{remark}[theorem]{Remark}
\numberwithin{equation}{section}
\numberwithin{theorem}{section}
\newcommand{\C}{\mathcal{C}}
\title[Multiplicity of solutions]{Multiplicity of solutions for fractional $q(.)$-Laplacian equations}
\author{Abita Rahmoune\textsuperscript{\,$\dagger$}}
\address{\textsuperscript{$\dagger$}\,Laboratory of Pure and Applied Mathematics, University of Laghouat, P.O. Box 37G, Laghouat 03000, Algeria}
\email{abitarahmoune@yahoo.fr}
\author{Umberto Biccari\textsuperscript{\,$\ast$}}
\address{\textsuperscript{$\ast$}\, [1] Chair of Computational Mathematics, Fundaci\'on Deusto, Avenida de las Universidades 24, 48007 Bilbao, Basque Country, Spain}
\address{[2]\,Facultad de Ingenier\'ia, Universidad de Deusto, Avenida de las Universidades 24, 48007 Bilbao, Basque Country, Spain.}
\email{umberto.biccari@deusto.es, u.biccari@gmail.com}
\thanks{This project has received funding from the European Research Council (ERC) under the European Union's Horizon 2020 research and innovation programme (grant agreement NO: 694126-DyCon). The work of UB is partially supported by the Air Force Office of Scientific Research (AFOSR) under Award NO: FA9550-18-1-0242, by the Grant MTM2017-92996-C2-1-R COSNET of MINECO (Spain) and by the Elkartek grant KK-2020/00091 CONVADP of the Basque government}
\keywords{Fractional elliptic equation, Variable-order fractional Laplacian, Variational methods, Fractional Sobolev spaces}
\subjclass[2010]{26A33, 35R11, 74G35}
\begin{document}

\begin{abstract}
In this paper, we deal with the following elliptic type problem 
\begin{displaymath}
	\begin{cases}
		(-\Delta)_{q(.)}^{s(.)}u + \lambda Vu = \alpha \left\vert u\right\vert^{p(.)-2}u+\beta \left\vert u\right\vert^{k(.)-2}u & \text{ in }\Omega,
		\\[7pt]
		u =0 & \text{ in }\mathbb{R}^{n}\backslash \Omega ,
	\end{cases}
\end{displaymath}
where $q(.):\overline{\Omega}\times \overline{\Omega}\rightarrow \mathbb{R}$ is a measurable function and $s(.):\mathbb{R}^n\times \mathbb{R}^n\rightarrow (0,1)$ is a continuous function, $n>q(x,y)s(x,y)$ for all $(x,y)\in \Omega \times \Omega $, $(-\Delta)_{q(.)}^{s(.)}$ is the variable-order fractional Laplace operator, and $V$ is a positive continuous potential. Using the mountain pass category theorem and Ekeland's variational principle, we obtain the existence of a least two different solutions for all $\lambda>0$. Besides, we prove that these solutions converge to two of the infinitely many solutions of a limit problem as $\lambda \rightarrow +\infty $. 
\end{abstract}

\maketitle

\section{Introduction}

In recent years, many authors have paid attention to the study of nonlocal fractional operators and related fractional differential equations. This is partially due to the large employment of these operators to model several phenomena such as ultra-relativistic limits of quantum mechanics, phase transition, population dynamics, minimal surfaces and game theory.

In this paper, we deal with the following elliptic equation for the fractional Laplace operator with variable order derivative involving
variable exponent nonlinearities:
\begin{equation}\label{mainProblem}
	\begin{cases}
		(-\Delta)_{q(.)}^{s(.)}u +\lambda V u = \alpha \left\vert u\right\vert^{p(.)-2}u + \beta \left\vert u\right\vert^{k(.)-2}u & \text{ in }\Omega,
		\\[7pt]
		u = 0 & \text{ in }\mathbb{R}^n\backslash \Omega.
	\end{cases}
\end{equation}

In \eqref{mainProblem}, for all $q(.) :\overline{\Omega }\times \overline{\Omega}\rightarrow (1,+\infty)$ measurable and $s(.):\mathbb{R}^{n}\times \mathbb{R}^{n}\rightarrow (0,1)$ continuous, with $n>q\left( x,y\right) s(x,y)$ for all $(x,y)\in \Omega \times \Omega$, we denote by $(-\Delta)_{q(.)}^{s(.)}$ the variable-order fractional $q-$Laplace operator which is defined for any $\varphi \in C_0^{\infty}(\Omega)$ as
\begin{align}\label{fl}
	&(-\Delta)_{q(.)}^{s(.)}\varphi(x) = 2\lim_{\varepsilon \rightarrow 0^+}\int_{\mathbb{R}^n\backslash B_{\varepsilon}(x)} \frac{\left\vert\varphi(x) -\varphi(y) \right\vert^{q(x,y)-2}\left(\varphi(x) -\varphi(y)\right)}{\left\vert x-y\right\vert^{n+q(x,y)s(x,y)}}\,\mathrm{d}y,
	\\
	&\text{for all }x\in \mathbb{R}^{n},
\end{align}
where $B_{\varepsilon }\left( x\right)$ indicates the ball of radius $\varepsilon >0$ centered at $x\in \mathbb{R}^{n}$. Moreover, $V:\Omega \rightarrow \lbrack 0,+\infty )$ is a continuous function and $\alpha$, $\beta$, $\lambda>0$ are positive parameters. Finally, the variable exponents $k(.)$ and $p(.)$ of the nonlinear terms are given measurable functions on $\Omega$.

The terminology \textit{variable-order} fractional Laplace operator indicates that $s(.)$ and $q(.)$ are functions and not real numbers. This operator is then a generalization of the fractional Laplacian $(-\Delta)^s$, which corresponds to $q(.)\equiv 2$ and $s(.) \equiv s\in (0,1)$ constant, and of the $q$-Laplacian $-\Delta_q$, which corresponds to $q(.)\equiv q\in (1,+\infty)$ constant and $s(.) \equiv 1$.

System \eqref{mainProblem} can be cast as an extension to the fractional variable-order case of the second-order elliptic equation with variable growth conditions
\begin{equation}\label{04}
	\begin{cases}
		-\Delta u + \lambda Vu = \alpha \left\vert u\right\vert^{p(.)-2}u + \beta \left\vert u\right\vert^{k(.)-2}u & \text{ in }\Omega,
		\\
		u = 0 & \text{ on }\partial\Omega,
	\end{cases}
\end{equation}
which is obtained when considering $s(.)\equiv 1$ and $q(.)\equiv 2$. Equation \eqref{04} is a well-known model for electrorheological fluids \cite{Ruzicka3}, whose properties have been studied for instance in \cite{Alves,Mihailescu1,Mihailescu2}. 

The research on fractional Laplace operators and their applications is very attractive and extended. In the last decade, many authors from different fields of the pure and applied mathematics have considered PDE models involving the fractional Laplacian and addressed many relevant questions such as existence, uniqueness and regularity of solutions \cite{Autuori,Biccari3,Biccari6,Caffarelli1,Caffarelli,Nezza,Mingqi1,Mingqi2,Molica,Servadei1,Servadei2,Warma1,Xiang1,Xiang2,Xiang3,Xiang4}, spectral properties \cite{Farcaseanu1,Farcaseanu2,Lindgren}, or even more applied issues, for example control problems \cite{Biccari1,Biccari2,Biccari4,Biccari5,Warma2,Warma3,Warma4,Warma5} or the description of several phenomena arising in finance and quantum mechanics \cite{Applebaum,Caffarelli1,Laskin}.

On the other hand, results for the variable-order fractional Laplacian are limited and rare, and the literature on this operator is much less extended. We refer for instance to \cite{Bahrouni1,Bahrouni2,Kaufmann,Mingqi} for some relevant bibliography. In particular, in \cite{Mingqi}, the authors considered equation \eqref{mainProblem} with $q(.)\equiv2$ in a bounded domain of $\mathbb{R}^{n}$ and, under some suitable assumptions, they showed that the problem admits at least two different solutions for all $\lambda>0$. Moreover, they proved that these two solutions converge to two solutions of a limit model as $\lambda \rightarrow +\infty $, for which they also obtained the existence of infinitely many solutions.

The purpose of this paper is to extend the results of \cite{Mingqi} to the operator \eqref{fl}. In particular, we will show also in this case the existence of two distinct solutions for all $\lambda>0$, and that these two solutions converge to two of the infinitely many solutions of a limit model as $\lambda\to +\infty$.

Our results will be obtained by similar techniques as \cite{Mingqi}, based on Ekeland's variational principle and a mountain pass theorem, which we suitably adapted to cover the case of a variable $q(.)$. Let us stress that some of these techniques had already been employed in our previous contributions \cite{rahmoune1,rahmoune2,rahmoune3}, in the context of wave-type equations with variable-exponent nonlinearities.

This paper is composed by three sections in addition to the introduction. In Section \ref{sec:functionalSetting}, we recall the definitions of the variable-exponent Lebesgue and Sobolev spaces, and present some of their relevant properties. We also state there our main results. In Section \ref{sec:proof1}, we prove our first result showing the existence of a least two distinct nontrivial weak solutions for \eqref{mainProblem}. In Section \ref{sec:proof2}, we focus on the case $\lambda\to +\infty$ and prove the existence of infinitely many solutions for this limit problem. Finally, Section \ref{sec:open_pb} is devoted to some conclusions and open problems.

\section{Functional setting and main results}\label{sec:functionalSetting}

In this section, we describe the functional setting in which we shall work and state our main results.

Let us start by introducing the Lebesgue and Sobolev spaces with variable exponent. Here we refer mainly to \cite{Diening1,Fan,Fu,Kovacik}.

Throughout the rest of the paper we assume that $\Omega $ is a bounded open domain of $\mathbb{R}^{n}$, $n\geq 1$, with smooth boundary $\Gamma$. Moreover, in what follows, if not stated differently, we will always assume that $p:\overline{\Omega}\to (1,+\infty)$ is a measurable function and we will denote
\begin{align*}
	p^-:=\underset{x\in \Omega }{\text{ess}\inf}[\,p\left( x\right)] \quad\text{ and }\quad p^+:=\underset{x\in \Omega }{\text{ess}\sup}[\,p\left( x\right)]
\end{align*}
Let
\begin{align*}
	\varrho_{p(.),\Omega}(v):=\int_{\Omega}\left\vert v(x) \right\vert^{p(x)}\mathrm{d}x.
\end{align*}
We then define the variable-exponent space $L^{p(.)}(\Omega)$ as
\begin{displaymath}
	L^{p(.)}(\Omega )=\left\{v:\Omega \rightarrow \mathbb{R} \text{ measurable } \Big|\;\varrho_{p(.),\Omega}(v)<+\infty\right\},
\end{displaymath}
which is a Banach space equipped with the Luxemburg norm
\begin{equation*}
	\left\Vert u\right\Vert_{L^{p(.)}(\Omega)}:=\inf \left\{ \lambda >0 \;\Big|\; \int_{\Omega}\left\vert \frac{u(x)}{\lambda}\right\vert ^{p(x)}\mathrm{d}x\leq 1\right\}.
\end{equation*}

Variable-exponent Lebesgue spaces are similar to classical Lebesgue spaces in many aspects (see for instance \cite{Kovacik}). In particular,
it follows directly from the definition of the norm that
\begin{equation}\label{05}
	\min \left(\left\Vert u\right\Vert_{L^{p(.)}(\Omega)}^{p^-},\left\Vert u\right\Vert_{L^{p(.)}(\Omega)}^{p^+}\right) \leq \varrho_{p(.),\Omega}(u) \leq \max \left( \left\Vert u\right\Vert_{L^{p(.)}(\Omega)}^{p^-},\left\Vert u\right\Vert_{L^{p(.)}(\Omega)}^{p^+}\right).
\end{equation}
Moreover, we have the following generalized H\"{o}lder's inequalities.

\begin{theorem}[{\cite[Theorem 2.1]{Kovacik}}]\label{Holder1}
Let $p:\overline{\Omega}\rightarrow (1,+\infty)$ measurable and define the conjugate exponent
\begin{displaymath}
	p^{\prime}(x) = \frac{p(x)}{p(x)-1}, \quad (p^{\prime})^- = \frac{p^-}{p^- -1}
\end{displaymath}
so that we have
\begin{displaymath}
	\frac{1}{p(x)} + \frac{1}{p^{\prime}(x)} = 1.
\end{displaymath}
Then, for all functions $u\in L^{p(.)}(\Omega)$ and $v\in L^{p^{\prime}(.)}(\Omega)$, we have 	
\begin{equation*}
	\int_{\Omega}\left\vert u(x)v(x) \right\vert \mathrm{d}x\leq \left(\frac{1}{p^-}+\frac{1}{(p^\prime)^-}\right) \left\Vert u\right\Vert_{L^{p(.)}(\Omega)}\left\Vert v\right\Vert_{L^{p^\prime(.)}(\Omega)}.
\end{equation*}
\end{theorem}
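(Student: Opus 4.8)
The plan is to reduce to the normalized case by homogeneity and then apply Young's inequality pointwise in $x$, exactly as in the classical proof of H\"older's inequality; the only twist is that the variable exponents force us to replace the constants $1/p$, $1/p'$ by the infimum-based bounds $1/p^-$, $1/(p')^-$, which is precisely what produces the extra factor on the right-hand side.

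First I would dispose of the degenerate cases: if $\|u\|_{L^{p(.)}(\Omega)}=0$ or $\|v\|_{L^{p'(.)}(\Omega)}=0$, then $u=0$ or $v=0$ a.e.\ and both sides vanish, so the inequality holds trivially. Assuming both norms are strictly positive, I set $\hat u := u/\|u\|_{L^{p(.)}(\Omega)}$ and $\hat v := v/\|v\|_{L^{p'(.)}(\Omega)}$, so that $\|\hat u\|_{L^{p(.)}(\Omega)} = \|\hat v\|_{L^{p'(.)}(\Omega)} = 1$. The key consequence I extract from the normalization is a bound on the modulars: applying \eqref{05} (and its analogue for $p'(.)$, which is permissible since \eqref{05} is stated for an arbitrary measurable exponent) with unit norm gives $\varrho_{p(.),\Omega}(\hat u) \le 1$ and $\varrho_{p'(.),\Omega}(\hat v) \le 1$.

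Next, for a.e.\ fixed $x\in\Omega$, since $1/p(x)+1/p'(x)=1$ with $p(x),p'(x)\in(1,+\infty)$, Young's inequality yields the pointwise bound
\[
	|\hat u(x)\hat v(x)| \le \frac{|\hat u(x)|^{p(x)}}{p(x)} + \frac{|\hat v(x)|^{p'(x)}}{p'(x)}.
\]
Using $p(x)\ge p^-$ and $p'(x)\ge (p')^-$, i.e.\ $1/p(x)\le 1/p^-$ and $1/p'(x)\le 1/(p')^-$, and integrating over $\Omega$, I obtain
\[
	\int_\Omega |\hat u(x)\hat v(x)|\,\mathrm{d}x \le \frac{1}{p^-}\,\varrho_{p(.),\Omega}(\hat u) + \frac{1}{(p')^-}\,\varrho_{p'(.),\Omega}(\hat v) \le \frac{1}{p^-}+\frac{1}{(p')^-}.
\]
Finally, undoing the normalization by multiplying through by $\|u\|_{L^{p(.)}(\Omega)}\|v\|_{L^{p'(.)}(\Omega)}$ recovers the claimed inequality.

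There is no genuinely hard step here; the argument is essentially a transcription of the constant-exponent proof. The only point requiring care---and the place where the variable exponent truly enters---is the passage from unit norm to the modular bound $\varrho\le 1$, which in the classical case is an identity but here is merely an inequality. I expect this to be the main (mild) obstacle, and I would handle it by invoking the already-established estimate \eqref{05} rather than re-deriving a continuity or monotone-convergence argument for the Luxemburg norm.
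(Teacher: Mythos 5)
Your proof is correct. The paper itself contains no proof of this statement --- it is imported verbatim from Kov\'a\v{c}ik--R\'akosnik, Theorem 2.1 --- and your argument (dispose of degenerate cases, normalize, convert unit norm into a modular bound via \eqref{05}, apply Young's inequality pointwise with the variable exponent, integrate, rescale) is precisely the standard proof from that reference, so there is no internal proof to compare it against.

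One point deserves attention, and it concerns the statement rather than your reasoning. Your key step reads $p^{\prime}(x)\geq (p^{\prime})^-$, i.e.\ it interprets $(p^{\prime})^-$ as $\operatorname{ess\,inf}_{x\in\Omega} p^{\prime}(x)$. That is the correct (and the only workable) interpretation: since $t\mapsto t/(t-1)$ is decreasing, one has $\operatorname{ess\,inf} p^{\prime} = p^+/(p^+-1)$, whereas the parenthetical formula in the theorem statement, $(p^{\prime})^- = p^-/(p^--1)$, is actually $\operatorname{ess\,sup}\, p^{\prime}$. With the paper's literal formula the constant would collapse to $1/p^- + (p^--1)/p^- = 1$, and H\"older's inequality with constant $1$ is false in general for Luxemburg norms (already a two-valued exponent, say $p=2$ on $(0,1)$ and $p=4$ on $(1,2)$, with suitable simple functions, violates it). So the displayed formula for $(p^{\prime})^-$ is a typo in the paper; your proof silently corrects it, and with $(p^{\prime})^- = \operatorname{ess\,inf} p^{\prime}$ every step you wrote --- including the modular bound $\varrho_{p(.),\Omega}(\hat u)\leq 1$ at unit norm, which indeed follows from \eqref{05} since both the min and the max there equal $1$ --- is valid.
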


\begin{theorem}[{\cite[Lemma 3.2.20]{Diening7}}]\label{Holder2}
Let $p,q,r:\overline{\Omega} \rightarrow (1,+\infty)$  be measurable functions such that
\begin{align*}
	\frac{1}{p(.)}=\frac{1}{r(.)} + \frac{1}{q(.)}.
\end{align*}
Then, for all functions $u\in L^{r(.)}(\Omega)$ and $v\in L^{q(.)}(\Omega)$, we have $uv\in L^{p(.)}(\Omega)$ with
\begin{equation*}
	\left\Vert uv\right\Vert_{L^{p(.)}(\Omega)}\leq \C\left\Vert u\right\Vert_{L^{r(.)}(\Omega)}\left\Vert v\right\Vert_{L^{q(.)}(\Omega)}.
\end{equation*}
\end{theorem}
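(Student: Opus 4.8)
The plan is to reduce everything to a pointwise application of Young's inequality, exploiting the fact that the hypothesis $\tfrac{1}{p(.)}=\tfrac{1}{r(.)}+\tfrac{1}{q(.)}$ makes suitably rescaled exponents conjugate at every point of $\Omega$. First, by the homogeneity of the Luxemburg norm I may assume, after dividing $u$ by $\|u\|_{L^{r(.)}(\Omega)}$ and $v$ by $\|v\|_{L^{q(.)}(\Omega)}$, that $\|u\|_{L^{r(.)}(\Omega)}=\|v\|_{L^{q(.)}(\Omega)}=1$; the general estimate then follows by scaling back.

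Next, I introduce the pointwise exponents $\tilde r(x):=r(x)/p(x)$ and $\tilde q(x):=q(x)/p(x)$. Since $\tfrac{1}{p(x)}=\tfrac{1}{r(x)}+\tfrac{1}{q(x)}$ forces $p(x)<r(x)$ and $p(x)<q(x)$, both $\tilde r(x),\tilde q(x)>1$, and a direct computation gives $\tfrac{1}{\tilde r(x)}+\tfrac{1}{\tilde q(x)}=p(x)\big(\tfrac{1}{r(x)}+\tfrac{1}{q(x)}\big)=1$, so $\tilde r(x)$ and $\tilde q(x)$ are conjugate at each $x$. Applying the numerical Young inequality $ab\le \tfrac{a^{\tilde r(x)}}{\tilde r(x)}+\tfrac{b^{\tilde q(x)}}{\tilde q(x)}$ with $a=|u(x)|^{p(x)}$ and $b=|v(x)|^{p(x)}$, and using $a^{\tilde r(x)}=|u(x)|^{r(x)}$, $b^{\tilde q(x)}=|v(x)|^{q(x)}$ together with $\tilde r(x),\tilde q(x)\ge 1$, I obtain the pointwise bound
\[
|u(x)v(x)|^{p(x)}\le |u(x)|^{r(x)}+|v(x)|^{q(x)} \quad\text{for a.e. } x\in\Omega.
\]

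Integrating this inequality over $\Omega$ controls the modular of $uv$ by the modulars of $u$ and $v$, namely $\varrho_{p(.),\Omega}(uv)\le \varrho_{r(.),\Omega}(u)+\varrho_{q(.),\Omega}(v)$. Under the normalization $\|u\|_{L^{r(.)}(\Omega)}=\|v\|_{L^{q(.)}(\Omega)}=1$, the right-hand inequality in \eqref{05} gives $\varrho_{r(.),\Omega}(u)\le 1$ and $\varrho_{q(.),\Omega}(v)\le 1$, so $\varrho_{p(.),\Omega}(uv)\le 2<+\infty$; in particular $uv\in L^{p(.)}(\Omega)$. Finally, to pass from this modular bound back to a norm estimate I invoke the left-hand inequality in \eqref{05} applied to $uv$: from $\min\big(\|uv\|_{L^{p(.)}(\Omega)}^{p^-},\|uv\|_{L^{p(.)}(\Omega)}^{p^+}\big)\le \varrho_{p(.),\Omega}(uv)\le 2$ one deduces $\|uv\|_{L^{p(.)}(\Omega)}\le 2^{1/p^-}$, which after undoing the scaling yields the claim with $\C=2^{1/p^-}$.

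The one genuinely delicate point is the interface between the modular and the norm: the variable exponent prevents the direct identity between $\|uv\|_{L^{p(.)}(\Omega)}$ and an integral that one has in the constant-exponent case, so the argument must route through the modular and can only recover the norm up to the multiplicative constant $2^{1/p^-}$ furnished by \eqref{05}. Everything else — the pointwise conjugacy of $\tilde r,\tilde q$ and the application of Young's inequality — is routine once the correct rescaling of the exponents is identified.
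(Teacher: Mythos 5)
The paper itself gives no proof of this statement: it is imported directly from the cited reference (Diening et al., Lemma 3.2.20), so there is no internal argument to compare yours against. Your proof is correct, and it is essentially the standard textbook argument for the generalized H\"older inequality: the rescaled exponents $\tilde r(x)=r(x)/p(x)$ and $\tilde q(x)=q(x)/p(x)$ are indeed pointwise conjugate, Young's inequality gives $|u(x)v(x)|^{p(x)}\le |u(x)|^{r(x)}+|v(x)|^{q(x)}$ a.e., and you handle correctly the one genuinely delicate passage, namely going from the modular bound $\varrho_{p(.),\Omega}(uv)\le 2$ back to the norm via the left-hand side of \eqref{05}: the case split $\left\Vert uv\right\Vert_{L^{p(.)}(\Omega)}\le 1$ versus $\left\Vert uv\right\Vert_{L^{p(.)}(\Omega)}> 1$ yields $\left\Vert uv\right\Vert_{L^{p(.)}(\Omega)}\le 2^{1/p^-}$, and homogeneity of the Luxemburg norm restores the general case. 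Two cosmetic remarks: the normalization step should explicitly set aside the trivial case where $u$ or $v$ vanishes a.e. (otherwise you divide by zero), and since $p^-\ge 1$ you may as well quote the exponent-independent constant $\C=2\ge 2^{1/p^-}$, which is the form in which the inequality appears in the cited reference.
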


Let us now introduce the variable order fractional Sobolev spaces. To this end, we shall make the following assumptions:

\medskip
\noindent\textbf{Hypothesis P:} $p:\overline{\Omega}\rightarrow (1,+\infty)$ is a measurable function satisfying:

\begin{subequations}
	\begin{align}
		&2<p^-\leq p(x) \leq p^+<+\infty \tag{\textbf{P1}}\label{hypP1}
		\\[10pt]
		&\displaystyle\left\vert p(x) -p(y) \right\vert \leq \frac{M}{\left\vert \log \left\vert x-y\right\vert \right\vert}\quad  \text{for all }x,y\text{ in }\Omega \text{ with }\left\vert x-y\right\vert <\frac{1}{2},\text{ }M>0 \tag{\textbf{P2}}\label{hypP2}
	\end{align}
\end{subequations}

\medskip
\noindent\textbf{Hypothesis Q:} $q:\overline{\Omega}\times \overline{\Omega}\rightarrow \mathbb{R}$ is a measurable function satisfying:
\begin{subequations}
	\begin{align}
		&q\text{ is symmetric, i.e., } q(x,y)=q(y,x) \text{ for all } (x,y)\in \overline{\Omega}\times \overline{\Omega} \tag{\textbf{Q1}}\label{hypQ1}
		\\[10pt]
		&1<q^-:=\min_{\overline{\Omega }\times \overline{\Omega}}\;q(x,y)\leq q(x,y)\leq \max_{\overline{\Omega}\times \overline{\Omega}}\;q(x,y)
		=: q^+<p^-<+\infty \tag{\textbf{Q2}}\label{hypQ2}
		\\[10pt]
		&q\big((x,y)-(z,z)\big)=q(x,y),\; \text{ for all } (x,y),(z,z)\in \Omega\times\Omega \tag{\textbf{Q3}}\label{hypQ3}
	\end{align}
\end{subequations}

\medskip
\noindent\textbf{Hypothesis S:} $s:\mathbb{R}^n\times\mathbb{R}^n\rightarrow (0,1)$ is a measurable function satisfying:
\begin{subequations}
	\begin{align}
		&s\text{ is symmetric, i.e., } s(x,y)=s(y,x) \text{ for all } (x,y)\in \mathbb{R}^n\times \mathbb{R}^n \tag{\textbf{S1}}\label{hypS1}
		\\[10pt]
		&0<s^-:=\min_{\mathbb{R}^{2n}}\;s(x,y)\leq s(x,y) \leq \max_{\mathbb{R}^{2n}}\;s(x,y)=:s^+<1 \tag{\textbf{S2}}\label{hypS2}
	\end{align}
\end{subequations}

We then define the generalized fractional Sobolev space with variable exponents via the Gagliardo approach as follows
\begin{equation*}
	\mathcal{H}^{s(.)}(\Omega) = H^{p(.),q(.),s(.)}(\Omega) =\left\{u\in L^{p(.)}(\Omega)\;\Big|\; [u]_{q(.),s(.),\Omega }<+\infty \right\},
\end{equation*}
where
\begin{equation*}
	[u]_{q(.),s(.),\Omega}=\inf \left \{\lambda>0 \;\bigg|\; \int_{\Omega}\int_{\Omega}\left\vert \frac{u(x)-u(y)}{\lambda}\right\vert
	^{q(x,y)}\!\!\frac{\mathrm{d}x\mathrm{d}y}{\left\vert x-y\right\vert ^{n+q(x,y)s(x,y)}}<1\right\}
\end{equation*}
is the corresponding variable exponent Gagliardo seminorm. Then, $\mathcal{H}^{s(.)}(\Omega)$ equipped with the norm
\begin{equation*}
	\left\Vert u\right\Vert_{\mathcal{H}^{s(.)}(\Omega)}=\left\Vert u\right\Vert_{L^{p(.)}(\Omega)} + [u]_{q(.),s(.),\Omega}
\end{equation*}
is a Banach space.

Define now $\mathcal{H}_0^{s(.)}(\Omega)=H_0^{p(.),q(.),s(.)}(\Omega)$ as the linear space of Lebesgue measurable functions $u:\mathbb{R}^n\rightarrow\mathbb{R}$ such that $u\in H^{p(.),q(.),s(.)}(\Omega)$ with $u=0$ in $\mathbb{R}^n\backslash\Omega$. Then, $\mathcal{H}_0^{s(.)}(\Omega)$ is a Banach space with the norm endowed by $\mathcal{H}^{s(.)}(\Omega)$. Moreover, we have the following result.

\begin{proposition}\label{Thes2}
Let $p(x)$, $q(x,y)$ and $s(x,y)$ be continuous variable exponents and define
\begin{align*}
	s^-:=\min_{\overline{\Omega }\times \overline{\Omega}} s(x,y).
\end{align*}
Assume that
\begin{subequations}
	\begin{flalign}
		n>s^-q(x,y) &\quad\text{ for  all } (x,y) \in\overline{\Omega}\times \overline{\Omega} \label{con01}
		\\[5pt]
		p(x)>q(x,x) &\quad\text{ for all } x\in \overline{\Omega} \label{con02}
	\end{flalign}	
\end{subequations}
Let \eqref{hypP1}, \eqref{hypP2}, \eqref{hypQ1}, \eqref{hypQ2} and \eqref{hypQ3} be satisfied. Assume that $r:\overline{\Omega}\rightarrow (1,+\infty)$ is a continuous function such that
\begin{equation}\label{con1}
	q^{\ast }(x):=\frac{nq(x,x)}{n-s^-q(x,x)}>r(x), \quad \text{ for all } x\in \overline{\Omega}.
\end{equation}
Then
\begin{itemize}
	\item[1.] There exists a constant $\C=\C(n,p,q,r,s,\Omega)$ such that for every $v\in \mathcal{H}_0^{s(.)}(\Omega)$ we have
	\begin{equation*}
		\left\Vert v\right\Vert_{L^{r(.)}(\Omega)}\leq \C\left\Vert v\right\Vert_{\mathcal{H}_0^{s(.)}(\Omega)},
	\end{equation*}
	i.e. $\mathcal{H}_0^{s(.)}(\Omega)$ can be continuously embedded into $L^{r(.)}(\Omega)$ for any $r\in (1,q^{\ast })$.
	\item[2.] The embedding $\mathcal{H}_0^{s(.)}(\Omega)\hookrightarrow L^{r(.)}(\Omega)$ is compact.
	\item[3.] When one considers functions $u\in\mathcal{H}_0^{s(.)}(\Omega)$ that are compactly supported inside $\Omega$, the embeddings $\mathcal{H}_0^{s^+}(\Omega) \hookrightarrow \mathcal{H}_0^{s(.)}(\Omega) \hookrightarrow \mathcal{H}_0^{s^-}(\Omega)$ are continuous.
\end{itemize}
\end{proposition}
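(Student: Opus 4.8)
The plan is to reduce all three assertions to the classical, constant-exponent fractional Sobolev embeddings, absorbing the variability of $q$, $s$ and $r$ through a localization/freezing argument justified by the assumed continuity of the exponents and the compactness of $\overline\Omega$. For part~1, I would first note that, since $q(\cdot,\cdot)$ is continuous, the critical exponent $q^*(x)$ defined in \eqref{con1} depends continuously on $x$; as $r$ is continuous with $r(x)<q^*(x)$ on the compact set $\overline\Omega$, the gap $q^*(x)-r(x)$ attains a strictly positive minimum. Around each $x_0\in\overline\Omega$ I would then pick a small ball $B_\rho(x_0)$ with $2\rho<1$ on which $q$ and $s$ oscillate by less than a prescribed amount, which freezes the exponents to the constants $q_0:=q(x_0,x_0)$ and $s_0:=s^-$, so that $q_0^*=nq_0/(n-s^-q_0)=q^*(x_0)$. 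On such a ball one bounds a constant-order Gagliardo seminorm $[\,\cdot\,]_{W^{s_0,q_0}(B_\rho)}$ by the variable-exponent one: since $|x-y|<1$ there, one has $|x-y|^{-(n+q_0s^-)}\le |x-y|^{-(n+q(x,y)s(x,y))}$ (the kernel is monotone in the exponent on the near-diagonal), while the numerators are compared through the elementary inequalities $t^{q_0}\le t^{q^-}+t^{q^+}$ obtained by distinguishing $t\ge 1$ from $t<1$. The classical fractional Sobolev inequality then yields $W^{s_0,q_0}(B_\rho)\hookrightarrow L^{r_0}(B_\rho)$ for a constant $r_0$ chosen in the gap, i.e. $\max_{\overline{B_\rho}}r(x)\le r_0<q_0^*$.

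\textbf{Patching (part~1).} I would cover $\overline\Omega$ by finitely many such balls $\{B_{\rho_i}(x_i)\}_{i=1}^N$ and sum the local estimates. Passing between the modular $\varrho_{r(.),\Omega}$ and the Luxemburg norm by means of \eqref{05} (and its analogue for the Gagliardo modular/seminorm), the finite sum of the local bounds produces $\|v\|_{L^{r(.)}(\Omega)}\le \C\,\|v\|_{\mathcal H_0^{s(.)}(\Omega)}$, which is the claimed continuous embedding, with a constant $\C=\C(n,p,q,r,s,\Omega)$ depending only on the data.

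\textbf{Compactness (part~2).} I would run the same localization but invoke instead the \emph{compactness} of the classical embedding $W^{s_0,q_0}(B_{\rho_i})\hookrightarrow\hookrightarrow L^{r_0}(B_{\rho_i})$ on each ball of the finite cover. Given a bounded sequence $\{u_j\}\subset\mathcal H_0^{s(.)}(\Omega)$, the freezing argument shows it is bounded in each $W^{s_0,q_0}(B_{\rho_i})$; a diagonal extraction then yields a subsequence converging in $L^{r_0}(B_{\rho_i})$ for every $i$, hence almost everywhere on $\Omega$. Since $r_0\ge r(x)$ on the corresponding ball and $\Omega$ is bounded, local $L^{r_0}$ convergence gives local $L^{r(.)}$ convergence; dominating through the continuous embedding of part~1 and invoking a dominated-convergence argument in the modular $\varrho_{r(.),\Omega}$, then converting back via \eqref{05}, upgrades the a.e. convergence to convergence in $L^{r(.)}(\Omega)$. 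This establishes the compact embedding.

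\textbf{Order comparison (part~3) and the main obstacle.} For part~3 the $L^{p(.)}$ contribution to the norm is common to the three spaces, so it suffices to compare the Gagliardo seminorms for the orders $s^+$, $s(\cdot,\cdot)$ and $s^-$. I would split the double integral into $\{|x-y|<1\}$ and $\{|x-y|\ge 1\}$. On the near-diagonal region the kernel $|x-y|^{-(n+q(x,y)s)}$ is increasing in $s$, so with $s^-\le s(x,y)\le s^+$ one gets the pointwise ordering of integrands yielding $[u]_{q(.),s^-}\le[u]_{q(.),s(.)}\le[u]_{q(.),s^+}$ there. On the far region the ordering reverses, but the kernel is bounded by $|x-y|^{-n}$ uniformly in $s\in(0,1)$; since $u$ is compactly supported in $\Omega$, this integral is finite and controlled by a power of $\|u\|_{L^{q(.)}(\Omega)}$, hence by $\|u\|_{\mathcal H_0^{s(.)}(\Omega)}$, contributing only a harmless additive term. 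Combining the two regions gives the continuous inclusions $\mathcal H_0^{s^+}(\Omega)\hookrightarrow\mathcal H_0^{s(.)}(\Omega)\hookrightarrow\mathcal H_0^{s^-}(\Omega)$. The step I expect to be most delicate is the variable-to-constant reduction in the Gagliardo seminorm: because $\max_x r(x)$ may exceed $\min_x q^*(x)$, no single constant target exponent works globally, forcing the covering argument, and the careful bookkeeping of the frozen constants $q_0,s_0$ against the classical embedding constants—while keeping uniform control of $\C$ across the finite cover—is the heart of the proof. The far-field estimate in part~3, by contrast, is routine.
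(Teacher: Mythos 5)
Your overall architecture for Part 1 --- localize to small sets on which the exponents oscillate little, freeze them, invoke the classical constant-exponent Sobolev embedding, then patch over a finite cover --- is exactly the paper's: it partitions $\Omega$ into hypercubes $B_i$ with $\mathrm{diam}(B_i)<\varepsilon$, freezes $q_i=\inf_{B_i\times B_i}(q-\delta)$ and $s_i=\inf_{B_i\times B_i}s$, applies the constant-exponent embedding on each $B_i$, and patches through the three comparison inequalities \eqref{ineq1}--\eqref{ineq3}. Your Part 3 (near-diagonal kernel monotonicity plus a far-field bound) is essentially the paper's Step 3, and your compactness argument (local compactness of $W^{s_0,q_0}(B_{\rho_i})\hookrightarrow L^{r_0}(B_{\rho_i})$ plus diagonal extraction) is a legitimate alternative to the paper's mollification estimate $\Vert v_j-v_j\ast\psi_\varepsilon\Vert_{L^{r(.)}(\Omega)}\leq c\varepsilon[v_j]_{q(.),s(.),\Omega}$. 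The genuine gap is at the heart of Part 1: the passage from the variable-exponent Gagliardo seminorm to the frozen one.

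The pointwise inequality $t^{q_0}\leq t^{q^-}+t^{q^+}$ cannot do that job. It is the standard device for comparing modulars in $L^{p(.)}$ over a \emph{finite} measure, where the residual term integrates to $|\Omega|$; here the relevant measure is $\mathrm{d}x\,\mathrm{d}y/|x-y|^{n+\cdot}$, which is not finite near the diagonal. What you actually need is $t^{q_0}\lesssim t^{q(x,y)}$ with $t=|u(x)-u(y)|/(\lambda|x-y|^{s_0})$. This holds where $t\geq 1$ (and only if $q_0\leq q(x,y)$ on the ball, which forces $q_0$ to be the local infimum of $q$ rather than $q(x_0,x_0)$); but where $t<1$ the ratio $t^{q_0}/t^{q(x,y)}=(1/t)^{q(x,y)-q_0}$ blows up as $(x,y)$ approaches the diagonal, and the error is integrated against a non-integrable kernel. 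Indeed, for a function with modulus of continuity $|x-y|^{s_0}\,|\log|x-y||^{-1/q_0}$ one can arrange the variable modular to be finite while the frozen modular diverges, precisely when the frozen order equals the local infimum of $s$ --- which is your choice $s_0=s^-$ at points where $s(x,y)=s^-$ (for instance whenever $s$ is constant and $q$ is not). This is why the paper does not argue pointwise: it obtains \eqref{ineq3} via the generalized H\"older inequality of Theorem \ref{Holder2} applied in a weighted measure on $B_i\times B_i$, the whole point being that the norm $\Vert 1\Vert_{L^{b_i(.,.)}(\mu,B_i\times B_i)}$ appearing there must be finite --- exactly the finiteness your pointwise residual lacks. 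Within your scheme the repair is to freeze the order with a strict gap, $s_0=s^--\eta$, and to split the near-diagonal region according to $t\lessgtr|x-y|^{\epsilon}$: on $\{t\leq|x-y|^{\epsilon}\}$ the integrand is at most $|x-y|^{\epsilon q_0-n}$, hence integrable, while on $\{|x-y|^{\epsilon}<t<1\}$ the bad factor is at most $|x-y|^{-\epsilon(q^+-q_0)}$ and is absorbed by the kernel gain $|x-y|^{\eta q^-}$ produced by the gap; the uniform positivity of $q^{\ast}(x)-r(x)$ (your $k_1$, as in \eqref{con1}) guarantees that for $\eta,\epsilon$ small the reduced critical exponent $nq_0/(n-s_0q_0)$ still dominates $r$ on the ball, so your patching step then goes through unchanged.
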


\begin{proof}
The proof will be organized in three steps, one for each different result we stated. In particular, in Step 1, we will show that $\mathcal{H}_0^{s(.)}(\Omega)$ can be continuously embedded into $L^{r(.)}(\Omega)$ for any $r\in (1,q^{\ast })$. In Step 2, we will show that this embedding is compact. Finally, in Step 3, we will show that when one considers functions $u\in\mathcal{H}_0^{s(.)}(\Omega)$ that are compactly supported inside $\Omega$, the embeddings $\mathcal{H}_0^{s^+}(\Omega) \hookrightarrow \mathcal{H}_0^{s(.)}(\Omega) \hookrightarrow \mathcal{H}_0^{s^-}(\Omega)$ are continuous.
	
\textbf{Step 1.} First of all, by definition of $s^{-}$ and using \eqref{con01} and \eqref{con1} we have that there exists a constant $k_1>0$ such that
\begin{align*}
	\frac{nq(x,x)}{n-s(.)q(x,x)}-r(x) \geq k_1>0 \quad \text{ for all } x\in\overline{\Omega}.
\end{align*}

Moreover, by \eqref{con02} we have the existence of a second positive constant $k_2$ such that
\begin{align*}
	p(x)-q(x,x) \geq k_2>0.
\end{align*}

Thus, there exists a constant $\varepsilon>0$ and $K$ numbers of disjoint hypercubes $B_i$ such that $\Omega =\cup_{i=1}^KB_i$ and $\text{diam}(B_i)<\varepsilon$, that verify
\begin{equation}\label{8}
	\begin{array}{l}
		\displaystyle\frac{nq(z,y)}{n-s(z,y) q(z,y)} -r(x) \geq \frac{k_1}{2},
		\\[10pt]
		\displaystyle p(x)\geq q(z,y)+\frac{k_2}{2},
	\end{array}
\end{equation}
for every $x\in B_i$ and $(z,y)\in B_i\times B_i$. Let
\begin{align*}
	s_i= \inf_{B_i\times B_i}\;s(z,y),\quad q_i=\inf_{B_i\times B_i}(q(z,y)-\delta) \quad \text{ and } \quad q_i^{\ast}=\frac{nq_i}{n-s_iq_i}.
\end{align*}

From \eqref{8} and the continuity of the associated exponents, we can pick $\delta=\delta(k_1)$, with $q^- -1>\delta >0$, such that
\begin{displaymath}
	\begin{array}{ll}
		\displaystyle\frac{nq_i}{n-s_iq_i}\geq r(x) +\frac{k_1}{2} &\text{ for all } x\in B_i \text{ and } n>s_iq_i,
		\\[10pt]
		\displaystyle p(x)\geq q_i+\frac{k_2}{2} & \text{ for all }x\in B_i.
	\end{array}
\end{displaymath}

Therefore we can employ the Sobolev embedding theorem for constant exponents (see \cite[Theorem 5.4]{Adams}) to get the existence of a suitable constant $\C=\C(n,q_i,s_i,\varepsilon,B_i)$ such that
\begin{equation}\label{11}
	\left\Vert u\right\Vert_{L^{q_i^{\ast }}(B_i)}\leq \C\left(\left\Vert u\right\Vert_{L^{q_i}(B_i)} + [u]_{q_i,s_i,B_i}\right).
\end{equation}

Let us now suppose that there exist three positive constants $c_1$, $c_2$ and $c_3$ such that
\begin{subequations}
	\begin{align}
		&\sum_{i=0}^n\left\Vert u\right\Vert_{L^{q_i^{\ast }}(B_i)}\geq c_1\left\Vert u\right\Vert_{L^{r(.)}(\Omega)} \label{ineq1}
		\\
		&\left\Vert u\right\Vert_{L^{p(.)}(\Omega)}\geq c_2\sum_{i=0}^n\left\Vert u\right\Vert_{L^{q_i}(B_i)} \label{ineq2}
		\\
		&[u]_{q(.),s(.),\Omega}\geq c_3\sum_{i=0}^n[u]_{q_i,s_i,B_i} \label{ineq3}
	\end{align}
\end{subequations}
Then, from \eqref{11} and \eqref{ineq1}-\eqref{ineq3} we can conclude that
\begin{align*}
	\left\Vert u\right\Vert_{L^{r(.)}(\Omega)}&\leq c_1\sum_{i=0}^n\left\Vert u\right\Vert_{L^{q_i^{\ast }}(B_i)} \leq \C\sum_{i=0}^n\left(\left\Vert u\right\Vert_{L^{q_i}(B_i)} + [u]_{q_i,s_i,B_i}\right)
	\\
	&\leq \C\left(\left\Vert u\right\Vert_{L^{p(.)}(\Omega)} + [u]_{q(.),s(.),\Omega}\right) = \C\left\Vert u\right\Vert_{\mathcal{H}_0^{s(.)}(\Omega)},
\end{align*}
as we wanted to show.
	
Hence, we only have to prove that \eqref{ineq1}-\eqref{ineq3} hold. Let us start with \eqref{ineq1}. We have
\begin{equation*}
	\left\vert u(x) \right\vert = \sum_{i=0}^n\left\vert u(x) \right\vert \chi_{B_i},
\end{equation*}
which clearly implies that
\begin{equation}\label{12}
	\left\Vert u\right\Vert_{L^{r(.)}(\Omega)} \leq \sum_{i=0}^n\left\Vert u\right\Vert_{L^{r(.)}(B_i)}
\end{equation}

Moreover, notice that for each $i$, $q_i^{\ast }>r(x)$ if $x\in B_i$. Then we can choose $a_i(x)$ such that
\begin{equation*}
	\frac{1}{r(x)}=\frac{1}{q_i^{\ast}} + \frac{1}{a_i(x)},
\end{equation*}
and by Theorem \ref{Holder2} we have
\begin{equation*}
	\left\Vert u\right\Vert_{L^{r(.)}(B_i)} \leq \C\left\Vert 1\right\Vert_{L^{a_i(.)}(B_i)}\left\Vert u\right\Vert_{L^{q_i^{\ast }}(B_i)} = \C\left\Vert u\right\Vert_{L^{q_i^{\ast}}(B_i)}.
\end{equation*}

Thus, \eqref{ineq1} immediately follows from \eqref{12}. Moreover, in a similar way and using that $p(x)>q_i$ for $x\in B_i$, we easily obtain also \eqref{ineq2}. Finally, to prove \eqref{ineq3} let us fix
\begin{equation*}
	U(x,y) = \frac{\left\vert u(x) - u(y) \right\vert}{\left\vert x-y\right\vert^{s_i}},
\end{equation*}
and remark that
\begin{align}\label{09}
	[u]_{q_i,s_i,B_i} &= \iint_{B_i\times B_i} \frac{\left\vert u(x)-u(y)\right\vert^{q_i}}{\left\vert x-y\right\vert^{n+q_i s_i}}\,\mathrm{d}x\mathrm{d}y =\iint_{B_i\times B_i}\left(\frac{\left\vert u(x) - u(y) \right\vert}{\left\vert x-y\right\vert^{s_i}}\right)^{q_i} \frac{\mathrm{d}x\mathrm{d}y}{\left\vert x-y\right\vert^n}  \notag
	\\
	&=\left\Vert U\right\Vert_{L^{q_i}(\mu,B_i\times B_i)}^{q_i}\leq \C\left\Vert 1\right\Vert_{L^{b_i(.,.)}(\mu,B_i\times B_i)}^{q_i}\left\Vert U\right\Vert_{L^{q(.,.)}(\mu,B_i\times B_i)}^{q_i}  \notag
	\\
	&=\C\left\Vert U\right\Vert_{L^{q(.,.)}(\mu,B_i\times B_i)}^{q_i},
\end{align}
where Theorem \ref{Holder2} is used with $b_i(x,y)$ such that
\begin{equation*}
	1=\frac{q_i}{q(x,y)}+\frac{q_i}{b_i(x,y)},
\end{equation*}
but considering the measure in $B_i\times B_i$ given by
\begin{equation*}
	\mathrm{d}\mu (x,y) =\frac{\mathrm{d}x\mathrm{d}y}{\left\vert x-y\right\vert^{\frac{n}{q_i}}}.
\end{equation*}
Now we aim to show that
\begin{equation}\label{13}
	\left\Vert U\right\Vert_{L^{q(.,.)}(\mu,B_i\times B_i)}\leq \C[u]_{q(.),s(.),B_i},
\end{equation}
for every $i$. If this is valid, then we directly obtain \eqref{ineq3} from \eqref{09}. Let $\lambda >0$ be such that
\begin{equation*}
	\iint_{B_i\times B_i}\left\vert \frac{u(x) - u(y)}{\lambda}\right\vert^{q(x,y)}\frac{\mathrm{d}x\mathrm{d}y}{\left\vert x-y\right\vert^{n+q(x,y) s(x,y)}}<1.
\end{equation*}
Then
\begin{align*}
	\iint_{B_i\times B_i} &\left(\frac{\left\vert u(x) - u(y) \right\vert}{\lambda \left\vert x-y\right\vert^{s(x,y)}}\right)^{q(x,y)}\frac{\mathrm{d}x\mathrm{d}y}{\left\vert x-y\right\vert^n}
	\\
	&= \iint_{B_i\times B_i}\left\vert \frac{u(x) - u(y)}{\lambda}\right\vert^{q(x,y)}\frac{\mathrm{d}x\mathrm{d}y}{\left\vert x-y\right\vert^{n+q(x,y)s(x,y)}}<1.
\end{align*}
Therefore
\begin{equation*}
	\left\Vert U\right\Vert_{L^{q(.,.)}(\mu,B_i\times B_i)}\leq \lambda,
\end{equation*}
and we finally obtain the inequality \eqref{13}.
	
\textbf{Step 2.} Let us consider a sequence $\{u_j\}_j\subset \mathcal{H}_0^{s(.)}(\Omega)$ such that $u_j\rightarrow u$ in $\mathcal{H}_0^{s(.)}(\Omega)$ as $j\to +\infty$, and denote $v_j:=u_j-u$. Hence $v_j\rightarrow 0$ in $\mathcal{H}_0^{s(.)}(\Omega)$, which implies that $[v_j]_{q(.),s(.),\Omega}$ is uniformly bounded.

Extend the functions $v_j$ by zero outside of $\Omega$ and, with some abuse of notation, denote this extension $v_j$. We have to show that $v_j\rightarrow 0$ in $L^{r(.)}(\Omega)$. To this end, let $\psi_{\varepsilon}$ be a standard mollifier for all $\varepsilon >0$. We then have
\begin{align*}
	v_j=\left(v_j-\psi_{\varepsilon}\ast v_j\right)+\psi_{\varepsilon }\ast v_j
\end{align*}
and from \eqref{ineq11} we get
\begin{align}\label{ineq11}
	\left\Vert v_j\right\Vert_{L^{r(.)}(\Omega)} &\leq \left\Vert v_j-v_j\ast \psi_{\varepsilon}\right\Vert_{L^{r(.)}(\Omega)} + \left\Vert v_j\ast \psi_{\varepsilon}\right\Vert_{L^{r(.)}(\Omega)} 
	\\
	&\leq c\varepsilon [v_j]_{q(.),s(.),\Omega} + \left\Vert v_j\ast \psi_{\varepsilon}\right\Vert_{L^{r(.)}(\Omega)}. \notag
\end{align}
Since $v_j\rightarrow 0$ and $\varepsilon >0$ is fixed, we obtain that as $j\rightarrow +\infty$
\begin{equation*}
	v_j\ast \psi_{\varepsilon}(x) = \int_{\mathbb{R}^n}\psi_{\varepsilon}(x-y)v_j(y)dy\rightarrow 0
\end{equation*}

Let $\Omega_{\varepsilon}:=\left\{x\in \mathbb{R}^n \,|\, \func{dist}(x,\Omega)\leq \varepsilon \right\}$. Thus $v_j\ast \psi_{\varepsilon}(x)=0$ for all $x\in \mathbb{R}^n\backslash \Omega_{\varepsilon}$. By the H\"older's inequality given in Theorem \ref{Holder1}, we then get that for all $x\in \Omega_{\varepsilon}$
\begin{equation*}
	\left\vert v_j\ast \psi_{\varepsilon}(x)\right\vert =\left\vert \int_{\mathbb{R}^n}\psi_{\varepsilon}(x-y)v_j(y)dy\right\vert \leq c\left\Vert v_j\right\Vert_{L^{r(.)}(\Omega)}\left\Vert \psi_{\varepsilon}(x-.)\right\Vert_{L^{r^\prime(.)}(\Omega)}.
\end{equation*}
As $\psi \in C_0^{\infty}(\mathbb{R}^n)$, we have $|\psi|\leq c$ and thus $\left\vert \psi _{\varepsilon }\right\vert \leq
c\varepsilon ^{-n}$. This gives
\begin{align*}
	\left\Vert \psi_{\varepsilon}(x-.)\right\Vert_{L^{r^\prime(.)}(\Omega)}\leq c\varepsilon^{-n}\left\Vert \chi_{\Omega_{\varepsilon }}\right\Vert_{L^{r^\prime(.)}(\Omega)}\leq c(\varepsilon,r),
\end{align*}
independently of the choice of $x\in \mathbb{R}^n$ and $j$. Using the uniform boundedness of $v_j$ in $L^{r(.)}$, we then have
\begin{equation*}
	\left\vert v_j\ast \psi_{\varepsilon}(x)\right\vert \leq c(\varepsilon,r)\chi_{\Omega_{\varepsilon}}(x) \quad\text{ for all }x\in \mathbb{R}^n.
\end{equation*}

Since $\chi_{\Omega_{\varepsilon }}\in L^{r(.)}(\mathbb{R}^n)$ and $v_j\ast \psi_{\varepsilon}(x)\rightarrow 0$ a. e., we obtain by the dominated convergence theorem that $v_j\ast \psi_{\varepsilon}\rightarrow 0$ in $L^{r(.)}(\mathbb{R}^n)$ as $j\rightarrow +\infty$. Hence, from \eqref{ineq11} it follows that
\begin{equation*}
	\limsup_{j\rightarrow +\infty}\left\Vert v_j\right\Vert_{L^{r(.)}(\Omega)}\leq c\varepsilon \limsup_{j\rightarrow +\infty}[v_j]_{q(.),s(.),\Omega}.
\end{equation*}

Since $\varepsilon >0$ is arbitrary and $[v_j]_{q(.),s(.),\Omega}$ is uniformly bounded, this implies that $v_j\rightarrow 0$ in $L^{r(.)}(\mathbb{R}^n)$ and thus $u_j\rightarrow u$ in $L^{r(.)}(\Omega)$, which yields the compactness of the embedding.

\textbf{Step 3.} When we consider functions that are compactly supported inside $\Omega$, we can get rid of the term $\left\Vert u\right\Vert_{L^{p(.)}(\Omega)}$ and it holds that
\begin{equation*}
	\left\Vert u\right\Vert_{L^{p(.)}(\Omega)}\leq\C [u]_{q(.),s(.),\Omega}.
\end{equation*}

Let $u\in \mathcal{H}_0^{s(.)}(\Omega )$. From \eqref{ineq3}, and because $s^{-}\leq s_{i}$ in $B_{i}$ for all $i$, since in the case of constant exponent $s_{i}$ the Sobolev embedding for subcritical exponents is continuous, we have 
\begin{equation*}
   \sum_{i=0}^{n}[u]_{q_i,s^-,B_i}\leq c\sum_{i=0}^n[u]_{q_i,s_i,B_i}\leq c[u]_{q(.),s(.),\Omega}<+\infty.
\end{equation*}

Using this for every $i$, it holds that $[u]_{q(.),s^-,\Omega}\leq c[u]_{q(.),s(.),\Omega}$, which gives $u\in \mathcal{H}_0^{s^-}(\Omega)$, and then $\mathcal{H}_0^{s(.)}(\Omega)\hookrightarrow \mathcal{H}_0^{s^-}(\Omega)$.

It only remains to show that $\mathcal{H}_0^{s^+}(\Omega)\hookrightarrow \mathcal{H}_0^{s(.)}(\Omega)$. To this end, let $u\in \mathcal{H}_0^{s^+}(\Omega)$, and let $\lambda >0$ be such that
\begin{equation*}
   \iint_{B_i\times B_i}\left\vert \frac{u(x)-u(y)}{\lambda}\right\vert^{q(x,y)}\frac{\mathrm{d}x\mathrm{d}y}{\left\vert x-y\right\vert ^{n+q(x,y)s^+}}<1.
\end{equation*}

Then, using that $diam(B_i)<\varepsilon <1$, we get $\left\vert x-y\right\vert <1$ for every $(x,y)\in B_i\times B_i$ and hence
\begin{align*}
	\iint_{B_i\times B_i} &\left\vert \frac{u(x)-u(y)}{\lambda}\right\vert^{q(x,y)}\frac{\mathrm{d}x\mathrm{d}y}{\left\vert x-y\right\vert^{n+q(x,y)s(x,y)}} 
	\\
	&=\iint_{B_i\times B_i}\left\vert \frac{u(x)-u(y)}{\lambda}\right\vert^{q(x,y)}\frac{\left\vert x-y\right\vert^{n+q(x,y)s^+}}{\left\vert x-y\right\vert^{n+q(x,y)s(x,y)}}\frac{\mathrm{d}x\mathrm{d}y}{\left\vert x-y\right\vert^{n+q(x,y)s^{+}}}
	\\
	&\leq \iint_{B_i\times B_i}\left\vert \frac{u(x)-u(y)}{\lambda}\right\vert^{q(x,y)}\frac{\mathrm{d}x\mathrm{d}y}{\left\vert x-y\right\vert^{n+q(x,y)s^+}}<1
\end{align*}
Therefore $u\in \mathcal{H}_0^{s(.)}(\Omega)$, that is, $\mathcal{H}_0^{s^+}(\Omega)\hookrightarrow \mathcal{H}_0^{s(.)}(\Omega)$.
\end{proof}

\begin{remark}
Our result is sharp in the following sense: if
\begin{equation*}
    q^{\ast }(x_{0})=\frac{nq(x_{0},x_{0})}{n-s(x_{0},x_{0})q(x_{0},x_{0})}<r(x_{0})
\end{equation*}%
for some $x_{0}\in \Omega $, then the embedding of $\mathcal{H}_{0}^{s(.)}(\Omega )$ in $L^{r(.)}(\Omega )$ cannot hold for every $p(.)$. As a matter of fact, from the continuity conditions we imposed on $q,$ $r$ and $s$, there exists a small ball $B_{\delta }(x_{0})$ such that
\begin{equation*}
  \max_{\overline{B}_{\delta }(x_{0})\times \overline{B}_{\delta }(x_{0})}\;\frac{nq(x,y)}{n-s(x,y)q(x,y)}<\min_{\overline{B}_{\delta }(x_{0})}\;r(x).
\end{equation*}%
Now, fix $p(x)<\min_{\overline{B}_{\delta }(x_{0})}\;r(x)$ and notice that, for $p(x)\geq r(x)$, we have that $\mathcal{H}_{0}^{s(.)}(\Omega )$ is embedded in $L^{r(.)}(\Omega )$. Hence, with the same arguments that hold for the constant exponent case, one can create a sequence $\{u_j\}_j$ supported inside $B_{\delta }(x_{0})$ such that $\left\Vert u_j\right\Vert _{\mathcal{H}_{0}^{s(.)}(\Omega)}\leq \mathcal{C}$ and $\left\Vert u_j\right\Vert_{L^{r(.)}(B_{\delta}(x_0))}\rightarrow +\infty $. In fact, it is enough to consider a smooth, compactly supported function $g$, and pick $u_j=j^{b}g(jx)$ with $b$ satisfying $bq(x,y)-n+s(x,y)q(x,y)\leq 0$ and $br(x)-n>0$ for $x,y\in \overline{B}_{\delta }(x_0)$.
\end{remark}

In what follows, we will always denote by $\C_r$ the constant of the Sobolev embedding $\mathcal{H}_0^{s(.)}(\Omega) \hookrightarrow L^{r(.)}(\Omega)$. Then, by applying \eqref{05} and Proposition \ref{Thes2}, for all
\begin{align*}
	p(x)\in \left(1,\frac{nq(x,x)}{n-s(x,x) q(x,x)}\right)
\end{align*}
we obtain
\begin{align}\label{06}
	\int_{\Omega }\left\vert u(x)\right\vert ^{p(x)}\mathrm{d}x& \leq \max\left( \left\Vert u\right\Vert _{L^{p(.)}(\Omega)}^{p^{-}},\left\Vert u\right\Vert_{L^{p(.)}(\Omega)}^{p^{+}}\right)
	\\
	& \leq \max \left(\C_p^{p^{-}}[u]_{q(.),s(.),\Omega}^{p^{-}},\C_p^{p^{+}}[u]_{q(.),s(.),\Omega}^{p^{+}}\right). \notag
\end{align}

We are now ready to provide our notion of solution to \eqref{mainProblem}. To this end, let us introduce the Banach space
\begin{equation*}
	E_{\lambda}=\left\{u\in L^{p(x)}(\Omega)\;\Big|\; [u]_{q(.),s(.),\Omega} + \lambda \int_{\Omega}V(x)\left\vert u(x) \right\vert^2\mathrm{d}x<+\infty \right\}
\end{equation*}
equipped with the norm $[u]_{q(.),s(.),\Omega}+\left\Vert u\right\Vert_{p(.)}$. Let $E_{\lambda,0}$ denote the closure of $C_0^{\infty}(\Omega)$ in $E_{\lambda}$. Then $E_{\lambda,0}$ is a Banach space with the norm
\begin{align*}
	\left\Vert u\right\Vert_{\lambda }:=[u]_{q(.),s(.),\Omega}.
\end{align*}

This space $E_{\lambda,0}$ is the appropriate functional setting in which we can define our notion of solution to \eqref{mainProblem}. In particular, we have
\begin{definition}\label{solDef}
A function $u\in E_{\lambda,0}$ is called a (weak) solution of problem \eqref{mainProblem} if
\begin{align*}
	\iint_{\mathbb{R}^{2n}} & \frac{\left\vert u(x) - u(y)	\right\vert^{q(x,y)-2}(u(x) - u(y))(v(x) - v(y))}{\left\vert x-y\right\vert^{n+q(x,y)s(x,y)}}\,\mathrm{d}x\mathrm{d}y
	\\
	&+ \lambda \int_{\Omega}V(x)u(x) v(x)\,\mathrm{d}x =\int_{\Omega}\left(\alpha \left\vert u\right\vert^{p(x)-2}uv(x) + \beta \left\vert u\right\vert^{k(x)-2}u(x) v(x)\right) \mathrm{d}x,
	\end{align*}
for any $v\in E_{\lambda,0}$.
\end{definition}

Notice that, given the variational nature of Definition \ref{solDef}, the solution of \eqref{mainProblem} can be characterized in terms of the critical points of a suitable functional. In particular, let us define
\begin{align*}
	I_{\lambda}(u) :=&\, \iint_{\mathbb{R}^{2n}}\frac{1}{q(x,y)}\frac{\left\vert u(x) - u(y) \right\vert^{q(x,y)}}{\left\vert x-y\right\vert^{n+q(x,y)s(x,y)}}\,\mathrm{d}x\mathrm{d}y + \frac{\lambda}{2}\int_{\Omega}V(x) \left\vert u(x) \right\vert^{2}\,\mathrm{d}x
	\\
	&-\int_{\Omega}\left(\frac{\alpha}{p(x)}\left\vert u\right\vert^{p(x)} + \frac{\beta}{k(x)}\left\vert u\right\vert^{k(x)}\right) \mathrm{d}x, \quad\text{ for all } u\in E_{\lambda,0}
\end{align*}
and $L_1:E_{\lambda,0}\rightarrow E_{\lambda,0}^{\ast}$ such that
\begin{align*}
	\langle L_1(u),v\rangle_{\lambda} =& \iint_{\mathbb{R}^{2n}}\frac{\left\vert u(x) -u(y) \right\vert^{q(x,y) -2}(u(x) - u(y))(v(x) - v(y))}{\left\vert x-y\right\vert^{n+q(x,y)s(x,y)}}\,\mathrm{d}x\mathrm{d}y
	\\
	&+\lambda \int_{\Omega}V(x) u(x) v(x)\, \mathrm{d}x, \quad \text{ for all }u,v\in E_{\lambda,0},
\end{align*}
where we denoted with $\langle \cdot,\cdot\rangle_{\lambda}$ the duality pair between $E_{\lambda,0}$ and $E_{\lambda,0}^{\ast}$. Then, one can verify that $I_{\lambda}$ is well-defined, of class $C^1$ in $E_{\lambda,0}$ and, for all $u,v\in E_{\lambda,0}$, we have
\begin{equation*}
	\langle I_{\lambda }^{\prime}(u),v\rangle_{\lambda} = \langle L_1(u),v\rangle_{\lambda} - \int_{\Omega}\left(\alpha\left\vert u\right\vert^{p(x)-2}u + \beta \left\vert u\right\vert^{k(x)-2}u\right) v\,\mathrm{d}x,
\end{equation*}

Hence if $u\in E_{\lambda,0}$ is such that $\langle I_{\lambda}^{\prime}(u),v\rangle_{\lambda} = 0$ for all $v\in E_{\lambda,0}$, then $u$ is a solution of \eqref{mainProblem}.

We can now introduce the main results of this paper. To this end, we shall first make the following assumptions on the potential $V$ and the function $k$.

\noindent\textbf{Hypothesis V:} $V:\Omega\rightarrow [0,+\infty)$ is a continuous function satisfying:
\begin{subequations}
	\begin{align}
		&J=int(V^{-1}(0))\subset\Omega \text{ is a nonempty bounded domain and } \widetilde{J}=V^{-1}(0) \tag{\textbf{V1}}\label{hypV1}
		\\[10pt]
		&\text{ there exists a nonempty open domain } \Omega_0\subset J \tag{\textbf{V2}}\label{hypV2}
		\\
		&\text{ such that } V(x)\equiv 0 \text{ for all } x\in \overline{\Omega}_0 \notag
	\end{align}
\end{subequations}

\noindent\textbf{Hypothesis K:} $k:\Omega\rightarrow \mathbb{R}$ is a continuous function satisfying:
\begin{subequations}
	\begin{align}
		1<k^-\leq k(x)\leq k^+<2 \quad \text{ for all }x\in \overline{\Omega } \tag{\textbf{K1}}\label{hypK1}
	\end{align}
\end{subequations}
Moreover, we shall consider the following variant of \eqref{hypP1}
\begin{align}\label{hypPbis}
	2<p^-\leq p(x)\leq p^+<\frac{nq(x,x)}{n-s(x,x)q(x,x)} \quad \text{ for all }x\in \overline{\Omega } \tag{\textbf{P1a}}
\end{align}
Finally, we assume that the parameters $\alpha$ and $\beta $ verify
\begin{align}\label{hypAlpha}
	\alpha \leq \frac{D(2-k^+)}{A(p^+ -k^+)}, \quad \beta \leq \frac{D(p^+ - 2)}{B(p^+ - k^+)}
\end{align}
with
\begin{align}\label{constants}
	A  =\frac{\max \left(\C_p^{p^-},\C_p^{p^+}\right)}{p^-}, \quad B=\frac{\max \left(\C_k^{k^-},\C_k^{k^+}\right)}{k^-}, \quad D=\min	\left(\frac{1}{q^+},\frac{1}{2}\right)
\end{align}
where, we recall, $\C_p$ and $\C_k$ are the constants of the Sobolev embeddings
\begin{align*}
	\mathcal{H}_0^{s(.)}(\Omega) \hookrightarrow L^{p(.)}(\Omega) \quad\text{ and }\quad \mathcal{H}_0^{s(.)}(\Omega) \hookrightarrow L^{k(.)}(\Omega),
\end{align*}
respectively.

The first main contribution of the present paper is the non-unicity of solutions for \eqref{mainProblem}. In particular, we have:

\begin{theorem}\label{The1}
Assume that \eqref{hypPbis}, \eqref{hypP2}, \eqref{hypS1}, \eqref{hypS2}, \eqref{hypV1}, \eqref{hypV2}, \eqref{hypK1} and \eqref{hypAlpha} hold. Let $n>q^+s^+$. Then \eqref{mainProblem} admits at least two distinct solutions for all $\lambda >0$.
\end{theorem}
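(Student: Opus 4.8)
The plan is to obtain both solutions as critical points of the $C^1$ functional $I_\lambda$ introduced above, so that by the discussion following Definition \ref{solDef} they are automatically weak solutions of \eqref{mainProblem}. The concave--convex structure is the engine: the term with exponent $k(.)$ is sublinear ($k^+<2$ by \eqref{hypK1}) while the term with exponent $p(.)$ is superlinear and dominates the $q(.)$-Laplacian at infinity ($p^->q^+$ by \eqref{hypPbis} and \eqref{hypQ2}). First I would record the two quantitative energy facts. Using \eqref{05}, the Sobolev embeddings of Proposition \ref{Thes2} with constants $\C_p,\C_k$, and the definition of $A,B,D$ in \eqref{constants}, one bounds the energy from below, for $\|u\|_\lambda=[u]_{q(.),s(.),\Omega}=\rho$ small, by an expression of the form $D\rho^{q^+}-A\alpha\,\rho^{p^-}-B\beta\,\rho^{k^-}$, the nonnegative potential term $\tfrac\lambda2\int_\Omega V|u|^2$ only improving the estimate. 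The precise purpose of \eqref{hypAlpha} is to force $A\alpha+B\beta\le D$; evaluated at a fixed radius $\rho_*$ this yields a positive lower bound $I_\lambda(u)\ge a>0$ on the whole sphere $\|u\|_\lambda=\rho_*$, which serves as the mountain ring.

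For the first solution I would minimise $I_\lambda$ on the complete metric space $\overline B_{\rho_*}=\{u:\|u\|_\lambda\le\rho_*\}$. Testing with $tw$ for a fixed $w\not\equiv0$ (conveniently supported in $\Omega_0$, where $V\equiv0$ by \eqref{hypV2}, so that the quadratic potential term drops out along the ray) and letting $t\to0^+$, the sublinear contribution, whose exponent sits below $2$ and below the leading growth of the positive part, dominates and makes $I_\lambda(tw)<0$; hence $\inf_{\overline B_{\rho_*}}I_\lambda<0$. Since this infimum is strictly below the value $a>0$ attained on $\partial B_{\rho_*}$, Ekeland's variational principle produces a minimising sequence that is also a Palais--Smale sequence and that stays in the interior of the ball, so its limit is a free critical point $u_1$ with $I_\lambda(u_1)<0$; in particular $u_1\neq0$. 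Extracting this limit requires the compactness statement of Proposition \ref{Thes2} together with the monotonicity/$(S_+)$ structure of $L_1$ discussed below.

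For the second solution I would invoke the mountain pass theorem with the geometry already in place: $I_\lambda(0)=0<a$, $I_\lambda\ge a$ on $\|u\|_\lambda=\rho_*$, and, since $p^->q^+$ forces $I_\lambda(tw)\to-\infty$ as $t\to+\infty$, a far point $e=t_0w$ with $\|e\|_\lambda>\rho_*$ and $I_\lambda(e)<0<a$. The resulting critical value $c=\inf_{\gamma}\max_{t}I_\lambda(\gamma(t))\ge a>0$ furnishes a second critical point $u_2$; because $I_\lambda(u_2)=c>0>I_\lambda(u_1)$, the two solutions are necessarily distinct.

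The main obstacle, which must be dispatched before either variational scheme closes, is the Palais--Smale condition for $I_\lambda$ at the relevant levels. Because $I_\lambda$ is not coercive, boundedness of a $(PS)$ sequence $\{u_n\}$ has to be extracted from a suitable linear combination such as $I_\lambda(u_n)-\tfrac1\mu\langle I_\lambda'(u_n),u_n\rangle_\lambda$ with $q^+<\mu<p^-$: the $p(.)$-term then contributes with a favourable sign while the sublinear $k(.)$-term and the potential term remain of lower order and are absorbed, yielding a bound on $\|u_n\|_\lambda$ through \eqref{05}. Passing from the resulting weak limit to strong convergence is the genuinely delicate point in the variable-exponent fractional setting: one uses the compact embedding $\mathcal H_0^{s(.)}(\Omega)\hookrightarrow L^{r(.)}(\Omega)$ to treat the nonlinear right-hand side, and then the strict monotonicity (the $(S_+)$ property) of the fractional $q(.)$-Laplacian $L_1$ to conclude $[u_n-u]_{q(.),s(.),\Omega}\to0$. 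Controlling this monotonicity uniformly in the variable exponent $q(x,y)$, through Simon-type inequalities and the modular/norm conversions of \eqref{05}, is where adapting \cite{Mingqi} to a genuinely variable $q(.)$ demands the most care.
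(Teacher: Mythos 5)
Your proposal follows essentially the same route as the paper's own proof: the mountain-pass geometry extracted from the Sobolev embedding of Proposition \ref{Thes2}, the constants $A,B,D$ and assumption \eqref{hypAlpha}; a negative-energy solution obtained from Ekeland's variational principle on the closed ball $\overline{B}_\rho$, where the infimum is made negative by the sublinear $k(.)$-term; a positive-energy solution from the mountain pass theorem using $p^->q^+$ for the far point; Palais--Smale compactness from boundedness of $(PS)$ sequences plus the compact embedding; and distinctness of the two solutions from the opposite signs of the two critical values. If anything, your explicit appeal to the $(S_+)$/monotonicity structure of $L_1$ in the strong-convergence step is more careful than the paper's own rather terse argument in Lemma \ref{Lemma4}.
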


Furthermore, in the following results, we investigate the concentration of solutions obtained by Theorem \ref{The1}.

\begin{theorem}\label{The2}
Let $u_{\lambda}^{1}$ and $u_{\lambda}^{2}$ be two solutions obtained in Theorem \ref{The1} and $\Omega_0$ as in \eqref{hypV2}. Then $u_{\lambda}^{1}\rightarrow u^{1}$ and $u_{\lambda}^{2}\rightarrow u^{2}$ in $\mathcal{H}_0^{s(.)}(\Omega)$ as $\lambda \rightarrow +\infty $, where $u^{1}\neq u^{2}$ are two nontrivial solutions of the following problem
\begin{equation}\label{23}
	\begin{cases}
		(-\Delta)_{q(.)}^{s(.)}u =\alpha \left\vert u\right\vert^{p(.) -1}u + \beta\left\vert u\right\vert^{k(.) -1}u & \text{ in }\Omega_0,
		\\[7pt]
		u =0 & \text{ in }\mathbb{R}^{n}\backslash \Omega_0.
	\end{cases}
\end{equation}
\end{theorem}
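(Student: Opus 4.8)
The plan is to follow the classical concentration (steep-potential-well) argument, adapted to the nonlocal variable-exponent framework, in four stages: uniform-in-$\lambda$ a priori bounds for the two families $u_\lambda^1,u_\lambda^2$; extraction of weak limits that are forced to concentrate on $\{V=0\}$; identification of these limits as weak solutions of \eqref{23}; and the passage from weak to strong convergence in $\mathcal{H}_0^{s(.)}(\Omega)$, together with non-triviality and distinctness.

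\textbf{Uniform bounds.} Set $c_\lambda^i:=I_\lambda(u_\lambda^i)$, $i=1,2$. The decisive observation is that, by \eqref{hypV2}, $V\equiv 0$ on $\overline{\Omega_0}$, so for every $w\in \mathcal{H}_0^{s(.)}(\Omega)$ supported in $\Omega_0$ the value $I_\lambda(w)$ is independent of $\lambda$ and equals the $\lambda$-independent energy $I_\infty$ associated to \eqref{23}. Performing the mountain-pass construction of Theorem \ref{The1} along paths lying in $\mathcal{H}_0^{s(.)}(\Omega_0)$, and the Ekeland minimization along a negative-energy family supported in $\Omega_0$, produces levels with $0<\rho_0\le c_\lambda^1\le C$ and $c_\lambda^2\le -\delta<0$, all constants independent of $\lambda$. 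The Ambrosetti--Rabinowitz computation of Theorem \ref{The1}, now fed with these $\lambda$-independent levels, bounds $[u_\lambda^i]_{q(.),s(.),\Omega}$ uniformly. Finally, since each $u_\lambda^i$ is a critical point, inserting $\langle I_\lambda'(u_\lambda^i),u_\lambda^i\rangle_\lambda=0$ into $I_\lambda(u_\lambda^i)-\tfrac1{p^-}\langle I_\lambda'(u_\lambda^i),u_\lambda^i\rangle_\lambda$ and using $q^+<p^-$ (so $\tfrac1{q(x,y)}-\tfrac1{p^-}>0$), $p^->2$ (so $\tfrac12-\tfrac1{p^-}>0$) and $p(x)\ge p^-$ to discard the non-negative Gagliardo and reaction terms, while bounding the concave term of growth $k^+<2$ through the already-established norm bound, isolates the potential contribution and yields
\[
	\sup_{\lambda\ge 1}\,[u_\lambda^i]_{q(.),s(.),\Omega}<\infty \qquad\text{and}\qquad \sup_{\lambda\ge 1}\,\lambda\int_\Omega V(x)\,|u_\lambda^i(x)|^2\,\mathrm{d}x<\infty.
\]

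\textbf{Weak limit, concentration and the limit equation.} By reflexivity and the compact embedding of Proposition \ref{Thes2}, along a subsequence $u_\lambda^i\rightharpoonup u^i$ in $\mathcal{H}_0^{s(.)}(\Omega)$, $u_\lambda^i\to u^i$ in $L^{r(.)}(\Omega)$ for every admissible $r$, and a.e. in $\Omega$. From the second uniform bound, $\int_\Omega V|u_\lambda^i|^2\,\mathrm{d}x\le C/\lambda\to 0$, so Fatou's lemma gives $\int_\Omega V|u^i|^2\,\mathrm{d}x=0$; since $V$ is continuous and positive off $\widetilde J=V^{-1}(0)$, this forces $u^i=0$ a.e. on $\{V>0\}$, so that by \eqref{hypV1}--\eqref{hypV2} the limit is supported in $\overline{\Omega_0}$ and lies in the solution space of \eqref{23}. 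To identify the equation I would pass to the limit in $\langle I_\lambda'(u_\lambda^i),\varphi\rangle_\lambda=0$ for $\varphi\in C_0^\infty(\Omega_0)$: the potential term drops because $V\equiv 0$ on $\Omega_0$, the reaction terms converge by the strong $L^{r(.)}$-convergence and the continuity of the associated Nemytskii maps, and the nonlocal bilinear term converges by weak convergence. Hence $u^i$ is a weak solution of \eqref{23}.

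\textbf{Strong convergence, non-triviality and the main difficulty.} To upgrade weak to strong convergence I would test the equation for $u_\lambda^i$ with $u_\lambda^i-u^i$: the term $\langle I_\lambda'(u_\lambda^i),u_\lambda^i-u^i\rangle_\lambda$ vanishes and the reaction terms tend to zero by strong $L^{r(.)}$-convergence, so $\langle L_1(u_\lambda^i),u_\lambda^i-u^i\rangle_\lambda\to 0$; since $u^i=0$ on $\{V>0\}$, the potential part of this pairing equals $\lambda\int_\Omega V|u_\lambda^i|^2\,\mathrm{d}x\ge 0$, whence the Gagliardo part has non-positive upper limit, and the $(S_+)$ property of the variable-order fractional $q(.)$-Laplacian promotes $u_\lambda^i\rightharpoonup u^i$ to $u_\lambda^i\to u^i$ in $\mathcal{H}_0^{s(.)}(\Omega)$. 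Comparing the two Nehari identities then forces $\lambda\int_\Omega V|u_\lambda^i|^2\,\mathrm{d}x\to 0$, so $I_\lambda(u_\lambda^i)\to I_\infty(u^i)$; combined with the uniform level bounds this gives $I_\infty(u^1)\ge\rho_0>0$ and $I_\infty(u^2)\le -\delta<0$, whence both limits are non-trivial (as $I_\infty(0)=0$) and $u^1\ne u^2$. I expect the strong-convergence step to be the main obstacle: the $(S_+)$ property for a \emph{variable} exponent $q(x,y)$ cannot be read off from the classical monotonicity inequalities and must be argued through the Gagliardo modular rather than the norm, using the compactness of Proposition \ref{Thes2} to discard all lower-order and potential contributions.
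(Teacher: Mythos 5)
Your proposal follows essentially the same route as the paper's proof: uniform-in-$\lambda$ bounds obtained from the $\lambda$-independent levels $\zeta<0<\delta<c_\lambda<\C_0$ together with the identity $I_\lambda(u)-\frac{1}{p^-}\langle I_\lambda'(u),u\rangle_\lambda$, Fatou's lemma to force the weak limits to concentrate on $V^{-1}(0)$ and hence lie in $\mathcal{H}_0^{s(.)}(\Omega_0)$, and the ordered energy levels to conclude nontriviality and $u^{1}\neq u^{2}$. If anything, you are more complete than the paper, which never explicitly carries out the weak-to-strong convergence upgrade (your $(S_+)$ argument, testing the equation with $u_\lambda^i-u^i$ and using that the potential term has a sign) nor the identification of the limit equation, disposing of both with the phrase ``similarly to the proof of Theorem \ref{The1}.''
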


\begin{theorem}\label{The3}
Assume that \eqref{hypPbis}, \eqref{hypS1}, \eqref{hypS2}, \eqref{hypK1} and \eqref{hypAlpha} hold. Let $n>2s^+$. Then problem \eqref{23} has infinitely many solutions.
\end{theorem}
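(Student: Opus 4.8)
The plan is to obtain infinitely many solutions of the limit problem \eqref{23} by exploiting its variational structure together with the $\mathbb{Z}_2$-symmetry of the associated energy functional, and then applying a symmetric mountain pass theorem (the Fountain Theorem, or equivalently the Clark-type / Krasnoselskii genus machinery). First I would introduce the functional setting on $\Omega_0$: since $V\equiv 0$ on $\overline{\Omega}_0$, the relevant space is simply $\mathcal{H}_0^{s(.)}(\Omega_0)$ with norm $[u]_{q(.),s(.),\Omega_0}$, and I would define the energy functional
\begin{align*}
	J(u) = \iint_{\mathbb{R}^{2n}}\frac{1}{q(x,y)}\frac{\left\vert u(x)-u(y)\right\vert^{q(x,y)}}{\left\vert x-y\right\vert^{n+q(x,y)s(x,y)}}\,\mathrm{d}x\mathrm{d}y - \int_{\Omega_0}\left(\frac{\alpha}{p(x)}\left\vert u\right\vert^{p(x)} + \frac{\beta}{k(x)}\left\vert u\right\vert^{k(x)}\right)\mathrm{d}x.
\end{align*}
By the same computation as for $I_\lambda$, one checks $J\in C^1(\mathcal{H}_0^{s(.)}(\Omega_0),\mathbb{R})$ and that critical points are exactly the weak solutions of \eqref{23}. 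The crucial structural observation is that $J$ is \emph{even}, i.e. $J(-u)=J(u)$, since every term involves $|u|$ or $|u(x)-u(y)|$ raised to an exponent; this $\mathbb{Z}_2$-invariance is what enables the multiplicity result.

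The key steps, in order, are the following. First I would verify the geometric hypotheses of the Fountain Theorem using a Schauder-type basis / eigenvalue decomposition $\mathcal{H}_0^{s(.)}(\Omega_0)=\overline{\bigoplus_k X_k}$, setting $Y_m=\bigoplus_{k\le m}X_k$ and $Z_m=\overline{\bigoplus_{k\ge m}X_k}$. On the finite-dimensional $Y_m$ all norms are equivalent, so using \eqref{05}, $\varrho$-norm comparison, and the dominance $p^->q^+$ from \eqref{hypQ2} one shows $J(u)\to -\infty$ as $\|u\|\to\infty$ on $Y_m$ (the $p(.)$-superlinear term controls the $q(.)$-term). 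Second, on $Z_m$ I would use the compact embedding $\mathcal{H}_0^{s(.)}(\Omega_0)\hookrightarrow L^{r(.)}$ from Proposition \ref{Thes2} to show that the embedding constants $\beta_m:=\sup_{u\in Z_m,\|u\|=1}\|u\|_{L^{p(.)}}\to 0$, and then estimate $J$ from below on the sphere $\|u\|=\rho_m$ in $Z_m$ to obtain $\inf_{Z_m,\|u\|=\rho_m}J\to +\infty$; here the subcritical growth \eqref{hypPbis} and the fact that $1<k^+<2<p^-<q^{\ast}(\cdot)$ are essential. Third, I would establish the Palais--Smale (or Cerami) condition: boundedness of a (PS)$_c$ sequence follows from the standard test $J(u_m)-\frac{1}{p^-}\langle J'(u_m),u_m\rangle$, exploiting again $p^->q^+>k^+$ so that the quadratic-type and subcritical terms are absorbed; then compactness of the embedding upgrades weak to strong convergence, with the monotonicity of the $q(.)$-Laplacian operator $L_1$ providing the final passage via an $(S_+)$-type argument.

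The main obstacle I expect is the Palais--Smale verification, specifically the strong-convergence step. Because the operator is a \emph{nonlinear}, variable-exponent $q(.)$-Laplacian rather than a linear one, I cannot simply subtract and test; instead I must show that $L_1$ has the $(S_+)$ property, namely that $u_m\rightharpoonup u$ together with $\limsup_m\langle L_1(u_m),u_m-u\rangle\le 0$ implies $u_m\to u$ strongly. This requires a Simon-type inequality for the integrand $|a-b|^{q(x,y)-2}(a-b)$ with a nonconstant exponent, combined with the $\varrho$-versus-norm equivalence \eqref{05} to pass from modular convergence to norm convergence; the nonconstancy of $q(x,y)$ makes these elementary inequalities more delicate than in the constant-exponent case. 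A secondary technical point is justifying the existence of the splitting $\{X_k\}$ and confirming $\beta_m\to 0$ in the variable-exponent setting, which I would handle by the usual contradiction argument using the compactness from Proposition \ref{Thes2}. Once these are in place, the Fountain Theorem yields an unbounded sequence of critical values and hence infinitely many (pairs of) nontrivial solutions of \eqref{23}, completing the proof.
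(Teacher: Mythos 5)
Your strategy is sound, but it is a genuinely different route from the paper's. The paper does not use the Fountain Theorem: it invokes a Rabinowitz-type symmetric mountain pass theorem (Theorem \ref{The4}, borrowed from Colasuonno--Pucci), whose geometric hypotheses are (a) a \emph{fixed} small-sphere estimate $I(u)\geq\delta$ on $\|u\|=\rho$, which the paper gets for free by observing that Lemma \ref{Lemma1} survives restriction to $\mathcal{H}_0^{s(.)}(\Omega_0)$, and (b) $I\leq 0$ outside a large ball in every finite-dimensional subspace $\mathcal{W}$, proved exactly as in your $Y_m$ step via equivalence of norms and $p^->2$, $p^->q^+$; evenness, $I(0)=0$ and the $(PS)_c$ condition (argued as in Lemmas \ref{Lemma3}--\ref{Lemma4}) complete the hypotheses. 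Your route replaces (a) by the tail estimate $\inf\{J(u) : u\in Z_m,\ \|u\|=r_m\}\to+\infty$, which costs you two ingredients the paper never needs: separability of $\mathcal{H}_0^{s(.)}(\Omega_0)$ with a Schauder-type splitting $\{X_k\}$, and the vanishing of the tail embedding constants $\beta_m\to 0$ via Proposition \ref{Thes2}. In exchange, your large-sphere geometry does not lean on the smallness condition \eqref{hypAlpha}, since the radii $r_m$ are free, whereas the paper's small-sphere step inherits \eqref{hypAlpha} through Lemma \ref{Lemma1}. Two points in your plan need attention. First, on $Z_m$ you control only the $p(.)$-term by $\beta_m$; the sublinear term $\int_{\Omega_0}|u|^{k(x)}\,\mathrm{d}x$ must also be absorbed, and since the hypotheses do not guarantee $k^+<q^-$ (only $k^+<2$ and $q^->1$), a bound of the form $C\,r_m^{k^+}$ can swamp the leading $r_m^{q^-}$ term; you should use that the $L^{k(.)}$ tail constants also vanish on $Z_m$ (same compactness argument) and choose $r_m$ compatibly with both. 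Second, your insistence on an $(S_+)$/Simon-inequality argument to upgrade weak to strong convergence of $(PS)$ sequences is well placed: this is precisely the step that the paper's Lemma \ref{Lemma4} glosses over, so on that point your proposal is more careful than the published proof.
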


\section{Proof of Theorem \ref{The1}}\label{sec:proof1}

We give here the proof of our first main result Theorem \ref{The1}. To this end, we first need some preparation.

Let us start by introducing the notion of $(PS)_c$ sequence and stating the so-called $(PS)_c$ condition.

\begin{definition}
For any $c\in\mathbb{R}$, a sequence $\{u_j\}_j\subset E_{\lambda}$ is called a $(PS)_c$ sequence if $I_{\lambda}(u_j)\rightarrow c$ and $I_{\lambda}^{\prime}(u_j)\rightarrow 0$ as $j\rightarrow +\infty$.
\end{definition}

\begin{definition}
We say that $I_{\lambda}$ provides the $(PS)_c$ condition in $E_{\lambda}$ at the level $c\in \mathbb{R}$ if each $(PS)_c$ sequence $\{u_j\}_j\subset E_{\lambda}$ possess a strongly convergent subsequence in $E_{\lambda}$.
\end{definition}

Moreover, in the sequel, we shall make use of the following standard mountain pass theorem (see for example \cite{Ambrosetti}).

\begin{theorem}\label{Thes01}
Let $E$ be a real Banach space and $J\in C^1(E,\mathbb{R})$ with $J(0)=0$. Suppose that
\begin{itemize}
	\item[(\textbf{i})] There exist $\rho,\delta >0$ such that $J(u)\geq\delta$ for all $u\in E$ with $\left\Vert u\right\Vert_E=\rho$.

	\item[(\textbf{ii})] There exists $e\in E$ satisfying $\left\Vert e\right\Vert_E>\rho $ such that $J(e)<0$.
\end{itemize}
Define $\Gamma =\{\gamma \in C^1([0,1];E)\;|\;\gamma (0)=1 \text{ and }\gamma (1)=e\}$. Then
\begin{equation*}
	c=\inf_{\gamma \in \Gamma}\max_{0\leq \sigma \leq 1}J(\gamma(\sigma))\geq \delta
\end{equation*}
and there exists a $(PS)_{c}$ sequence $\{u_j\}_j\subset E$.
\end{theorem}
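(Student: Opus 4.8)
The plan is to establish the two assertions of the theorem separately: first the lower bound $c\geq\delta$ on the minimax level, and then the existence of a $(PS)_c$ sequence at that level (note that this version of the mountain pass theorem does not claim a critical point, only a Palais--Smale sequence, so no compactness hypothesis is needed). For the lower bound I would fix an arbitrary admissible path $\gamma\in\Gamma$ and use a connectedness argument. Since $\gamma(0)=0$ has norm $0<\rho$ while $\gamma(1)=e$ has norm $\|e\|_E>\rho$, the continuous scalar map $\sigma\mapsto\|\gamma(\sigma)\|_E$ attains the intermediate value $\rho$ at some $\sigma_0\in(0,1)$. At that crossing point, hypothesis (\textbf{i}) gives $J(\gamma(\sigma_0))\geq\delta$, hence $\max_{\sigma}J(\gamma(\sigma))\geq\delta$; taking the infimum over $\gamma\in\Gamma$ yields $c\geq\delta$. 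In particular $c>0=J(0)$ and $c>0>J(e)$, so the level $c$ is strictly separated from both endpoint values, a fact I will need below.

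For the existence of a $(PS)_c$ sequence I would argue by contradiction via a quantitative deformation. Suppose no such sequence exists; then there are constants $\varepsilon,b>0$ with $\varepsilon<c$ such that $\|J'(u)\|_{E^\ast}\geq b$ for every $u$ in the strip $S=\{u\in E:|J(u)-c|\leq\varepsilon\}$. Because $J$ is merely $C^1$, its derivative need not be locally Lipschitz and the gradient flow is not directly available, so I would first construct a locally Lipschitz pseudo-gradient vector field $W$ on the set of regular points, obeying $\|W(u)\|_E\leq 2\|J'(u)\|_{E^\ast}$ and $\langle J'(u),W(u)\rangle\geq\|J'(u)\|_{E^\ast}^2$. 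Integrating the associated flow $\tfrac{d}{dt}\eta=-\chi(\eta)\,W(\eta)/\|W(\eta)\|_E$, where $\chi$ is a cutoff localizing to $S$, produces a deformation $\eta:[0,1]\times E\to E$ that fixes every point outside $S$ and pushes the sublevel set $J^{c+\varepsilon/2}:=\{u:J(u)\leq c+\varepsilon/2\}$ into $J^{c-\varepsilon/2}$.

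With the deformation in hand, I would pick by definition of the infimum a path $\gamma\in\Gamma$ with $\max_{\sigma}J(\gamma(\sigma))\leq c+\varepsilon/2$, and set $\widetilde{\gamma}(\sigma)=\eta(1,\gamma(\sigma))$. Since $J(0)=0$ and $J(e)<0$ both lie well below $c-\varepsilon$, the deformation leaves the endpoints fixed, so $\widetilde{\gamma}\in\Gamma$ remains admissible; yet $\max_{\sigma}J(\widetilde{\gamma}(\sigma))\leq c-\varepsilon/2<c$, contradicting the definition of $c$ as the infimum. Hence a $(PS)_c$ sequence must exist. The main obstacle is precisely the construction of the pseudo-gradient field and the verification that its flow is globally defined and enjoys the stated deformation properties in an infinite-dimensional Banach space. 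Alternatively, this technical core can be bypassed by applying Ekeland's variational principle to the functional $\Phi(\gamma)=\max_{\sigma}J(\gamma(\sigma))$ on the complete metric space $(\Gamma,d)$ with $d(\gamma_1,\gamma_2)=\max_{\sigma}\|\gamma_1(\sigma)-\gamma_2(\sigma)\|_E$, obtaining an almost-minimizing path from whose maximizing set one extracts a point at which $\|J'\|_{E^\ast}$ is small; this route is closely aligned with the variational tools already employed elsewhere in the paper.
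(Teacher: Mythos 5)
The paper never proves this statement: Theorem~\ref{Thes01} is the classical Ambrosetti--Rabinowitz mountain pass theorem in its weak form (yielding only a Palais--Smale sequence, no compactness assumed), and the paper simply quotes it from \cite{Ambrosetti}. So your attempt can only be measured against the standard literature proof --- which is essentially what you reproduce, correctly in outline. Your first step (intermediate value theorem applied to $\sigma\mapsto\left\Vert\gamma(\sigma)\right\Vert_E$, producing a crossing of the sphere of radius $\rho$ and hence $c\geq\delta$) and your second step (negating the existence of a $(PS)_c$ sequence to get uniform bounds $\left\Vert J'(u)\right\Vert_{E^\ast}\geq b$ on a strip $\left\vert J(u)-c\right\vert\leq\varepsilon$ with $\varepsilon<c$, then deriving a contradiction either through a pseudo-gradient flow and the quantitative deformation lemma, or through Ekeland's principle applied to $\Phi(\gamma)=\max_{\sigma}J(\gamma(\sigma))$) are exactly the two classical routes. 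The logic is sound: since $c\geq\delta>0$ while $J(0)=0$ and $J(e)<0$, both endpoints lie strictly below the strip, so the deformation fixes them and the pushed-down path stays admissible, contradicting the definition of the infimum.

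Two wrinkles, both inherited from the paper's statement rather than from your argument, deserve mention. First, you silently read $\gamma(0)=0$ where the paper writes $\gamma(0)=1$; this correction is necessary (otherwise your IVT step, and indeed the theorem itself, make no sense) and matches the original reference. Second, the paper takes $\Gamma\subset C^1([0,1];E)$ rather than $C([0,1];E)$, and this touches both of your routes: the time-one map of a locally Lipschitz pseudo-gradient flow is only Lipschitz, so $\widetilde{\gamma}=\eta(1,\gamma(\cdot))$ need not be $C^1$ and hence need not lie in the paper's $\Gamma$; likewise the space of $C^1$ paths with the sup metric is not complete, so Ekeland's principle does not apply verbatim. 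The standard repair is to work with continuous paths (as in \cite{Ambrosetti}), for which both arguments close, and to observe that the minimax values over $C^0$ and $C^1$ paths coincide by mollifying in the parameter. With that reading, your outline is a correct proof; the only ingredient you leave unproved is the pseudo-gradient/deformation construction, which you rightly identify as the technical core and which is standard.
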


Theorem \ref{Thes01} applied to the functional $I_\lambda$ will be the starting point to prove our main result. To this end, we first need to check that $I_{\lambda}$ possesses the mountain pass geometry (\textbf{i}) and (\textbf{ii}). This is ensured by the following lemma.

\begin{lemma}\label{Lemma1}
Assume that the assumptions \eqref{hypPbis}, \eqref{hypS2}, \eqref{hypV1}-\eqref{hypV2}, \eqref{hypK1} and \eqref{hypAlpha} are satisfied. Then
\begin{itemize}
	\item[1.] For all $\lambda>0$, there exist $\rho>0$ and $\delta>0$ such that
	\begin{equation}\label{014}
		I_{\lambda}(u) > \delta \text{ for all } u\in E_{\lambda} \text{ with }\left\Vert u\right\Vert_{\lambda }=\rho.
	\end{equation}
	\item[2.] There exists $e\in E_{\lambda}$ with $\left\Vert e\right\Vert_{\lambda}>\rho$, where $\rho >0$ is fixed in \eqref{014}, such that $I_{\lambda}(e)<0$ for all $\lambda >0$.
\end{itemize}
\end{lemma}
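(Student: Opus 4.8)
The plan is to verify the two mountain-pass conditions directly from the definition of $I_\lambda$, exploiting the hypotheses \eqref{hypPbis}, \eqref{hypK1} and the Sobolev embeddings from Proposition \ref{Thes2}. The key structural fact is that the leading term of $I_\lambda$ behaves like $\|u\|_\lambda^{q(x,y)}$ (with exponent at least $q^-$ for small norms), the potential term is nonnegative and so can be dropped for a lower bound, while the two nonlinear terms carry exponents $p(x)\in(2,q^*)$ and $k(x)\in(1,2)$. Since $k^+<2<p^-$, near the origin the $k(.)$-term is the most dangerous (lowest power), and far from the origin the $p(.)$-term dominates and drives $I_\lambda$ to $-\infty$.

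For part 1, I would start from
\[
I_\lambda(u)\;\geq\; \frac{1}{q^+}[u]_{q(.),s(.),\Omega}^{q^+}\;-\;\frac{\alpha}{p^-}\int_\Omega|u|^{p(x)}\,\mathrm{d}x\;-\;\frac{\beta}{k^-}\int_\Omega|u|^{k(x)}\,\mathrm{d}x,
\]
discarding the nonnegative potential term and bounding $1/q(x,y)\geq 1/q^+$, $1/p(x)\leq 1/p^-$, $1/k(x)\leq 1/k^-$. For $\|u\|_\lambda=[u]_{q(.),s(.),\Omega}=\rho$ small (say $\rho<1$), the modular-norm inequality \eqref{05} controls the Lebesgue-space norms: using the embedding \eqref{06} I would bound $\int_\Omega|u|^{p(x)}\,\mathrm{d}x\leq \max(\C_p^{p^-},\C_p^{p^+})\,\rho^{p^-}$ and similarly $\int_\Omega|u|^{k(x)}\,\mathrm{d}x\leq \max(\C_k^{k^-},\C_k^{k^+})\,\rho^{k^-}$ (choosing the appropriate branch of the max for $\rho<1$, where the smaller exponent dominates). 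This yields a lower bound of the form
\[
I_\lambda(u)\;\geq\; D\rho^{q^+}\;-\;\alpha A\rho^{p^-}\;-\;\beta B\rho^{k^-},
\]
with $A,B,D$ as in \eqref{constants}. Since $k^-<2<q^+<p^-$ is not quite the ordering I need, I would instead factor out $\rho^{k^-}$ and observe that $g(\rho):=D\rho^{q^+-k^-}-\alpha A\rho^{p^--k^-}-\beta B$ is positive for a suitable small $\rho$, using \eqref{hypAlpha} to make $\beta B$ small enough relative to $D$; then $I_\lambda(u)\geq \rho^{k^-}g(\rho)=:\delta>0$.

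For part 2, I would fix any nontrivial $u_0\in C_0^\infty(\Omega_0)$ (so the potential term vanishes on its support by \eqref{hypV2}) and examine $I_\lambda(tu_0)$ as $t\to+\infty$. The homogeneity gives a leading term growing like $t^{q^+}$ from the Gagliardo part but a negative contribution growing like $t^{p^-}$ from the $\alpha|u|^{p(x)}$ term; since $p^->q^+$ by \eqref{hypQ2}, the $p(.)$-term wins and $I_\lambda(tu_0)\to-\infty$. Choosing $e=t u_0$ with $t$ large enough to make both $\|e\|_\lambda>\rho$ and $I_\lambda(e)<0$ completes this part; note $\lambda$ plays no role here precisely because $u_0$ is supported where $V\equiv 0$. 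The main obstacle I anticipate is the bookkeeping in part 1: because the exponents are variable and the modular/norm relation \eqref{05} switches branches at $\|u\|=1$, one must be careful to restrict to $\rho<1$ and to track which power of $\rho$ is genuinely the lowest, so that the combination of \eqref{hypAlpha} with the constants \eqref{constants} delivers a strictly positive $\delta$ rather than merely a non-negative bound.
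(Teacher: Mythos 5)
Your strategy is the same in spirit as the paper's: discard the nonnegative potential term, convert the Gagliardo modular and the two nonlinear terms into powers of $\rho=\left\Vert u\right\Vert_\lambda$ via \eqref{05}--\eqref{06}, and reduce part 1 to the positivity of a one-variable function controlled through \eqref{hypAlpha}. But you execute it in the complementary regime: you work at $\rho<1$ with the low branches (leading exponent $q^+$, nonlinear exponents $p^-$, $k^-$), while the paper works at $\rho\geq 1$ with the high branches (leading exponent $2$, nonlinear exponents $p^+$, $k^+$), which is the regime that \eqref{constants} and \eqref{hypAlpha} are actually calibrated for: there the maximizer $\sigma^{\ast}=\left[D(2-k^+)/(A\alpha(p^+-k^+))\right]^{1/(p^+-2)}$ of $\psi(\sigma)=D\sigma^{2-k^+}-A\alpha\sigma^{p^+-k^+}-B\beta$ satisfies $\sigma^{\ast}\geq 1$ automatically by the first inequality of \eqref{hypAlpha}, and one takes $\rho=\sigma^{\ast}$, $\delta=(\sigma^{\ast})^{k^+}\psi(\sigma^{\ast})$. (Incidentally, your lower bound $\tfrac{1}{q^+}\rho^{q^+}$ for the leading term is the correct one when $\rho<1$, whereas the exponent $2$ used in the high-norm regime implicitly needs $q^-\geq 2$, which \eqref{hypQ2} does not grant.) Your part 2 is correct and in fact tighter than the paper's own argument: choosing $e$ supported in $\Omega_0$, where $V\equiv 0$ by \eqref{hypV2}, is exactly what makes a single $e$ with $I_\lambda(e)<0$ work uniformly in $\lambda>0$, since the $\lambda$-dependent potential term vanishes identically; the condition $p^->q^+$ from \eqref{hypQ2} then does the rest.

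The genuine gap is in your positivity step for part 1. You claim that $g(\rho)=D\rho^{q^+-k^-}-\alpha A\rho^{p^--k^-}-\beta B$ is positive ``for a suitable small $\rho$, using \eqref{hypAlpha} to make $\beta B$ small enough relative to $D$''. This fails as stated: whenever $q^+>k^-$ (which the hypotheses permit and which is the typical case), one has $g(\rho)\to-\beta B<0$ as $\rho\to 0^+$, so no small $\rho$ can work; moreover \eqref{hypAlpha} does not make $\beta B$ small relative to $D$, it only gives $\beta B\leq D(p^+-2)/(p^+-k^+)$, a fraction of $D$ that tends to $1$ as $k^+\to 2^-$. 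The repair inside your framework is to evaluate near $\rho=1$ instead of near $0$: the two inequalities of \eqref{hypAlpha} read $\alpha A\leq D\frac{2-k^+}{p^+-k^+}$ and $\beta B\leq D\frac{p^+-2}{p^+-k^+}$, and summing them gives $\alpha A+\beta B\leq D$, hence $g(1)=D-\alpha A-\beta B\geq 0$; if at least one inequality in \eqref{hypAlpha} is strict, then $g(1)>0$ and by continuity some $\rho_0<1$ satisfies $g(\rho_0)>0$, yielding $\delta=\rho_0^{k^-}g(\rho_0)>0$ (alternatively, use the interior maximizer of $g$ when it lies in $(0,1)$). The borderline case of equality in both slots of \eqref{hypAlpha}, where this argument only yields $\delta\geq 0$, is a defect shared by the paper's own proof (there $\sigma^{\ast}=1$ and $\psi(\sigma^{\ast})=0$), so it is not specific to your route; but the ``small $\rho$'' justification is a step that would fail, and it is precisely the step that hypothesis \eqref{hypAlpha} exists to handle.
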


\begin{proof} We divide the proof into two steps, one for each different result we are claiming. 

\textbf{Step 1.} Let us start by proving the first mountain pass property. Using the fractional Sobolev inequality and \eqref{06}, for all $u\in E_{\lambda}$, we have
\begin{align}\label{14}
	\int_{\Omega } &\left(\frac{\alpha}{p(x)}\left\vert u\right\vert ^{p(x)}+\frac{\beta}{k(x)}\left\vert u\right\vert^{k(x)}\right) \mathrm{d}x 	
	\\
	&\leq \frac{\alpha }{p^-}\int_{\Omega}\left\vert u\right\vert^{p(x)}\mathrm{d}x+\frac{\beta}{k^-}\int_{\Omega}\left\vert u\right\vert^{k(x)}\mathrm{d}x  \notag
	\\
	&\leq \frac{\alpha}{p^-}\max \left(\C_p^{p^-}\left\Vert u\right\Vert_{\lambda}^{p^-},\C_p^{p^+}\left\Vert u\right\Vert_{\lambda}^{p^+}\right) + \frac{\beta}{k^-}\max \left(\C_k^{k^-}\left\Vert u\right\Vert_{\lambda}^{k^-},\C_k^{k^+}\left\Vert u\right\Vert_{\lambda}^{k^+}\right). \notag
\end{align}
We then get from \eqref{14} that
\begin{align*}
	I_{\lambda}\geq \min \left(\frac{1}{q^+},\frac{1}{2}\right) \left\Vert u\right\Vert_{\lambda}^2 - \frac{\alpha}{p^-}\max \left( \C_p^{p^-}\!,\C_p^{p^+}\right) \left\Vert u\right\Vert_{\lambda}^{p^+} -\frac{\beta}{k^-}\max \left(\C_k^{k^-}\!,\C_k^{k^+}\right) \left\Vert u\right\Vert_{\lambda}^{k^+}
\end{align*}
for all $u\in E_{\lambda}$ with $\left\Vert u\right\Vert_{\lambda}\geq 1$.

Set the constants $A$, $B$ and $C$ as in \eqref{constants} and let $\phi:[0,+\infty)\rightarrow \mathbb{R}$ be an auxiliary
function such that
\begin{equation*}
	\phi (\sigma) =\psi (\sigma) \sigma^{k^+} \text{ for all }\sigma \geq 0,
\end{equation*}
where
\begin{equation*}
	\psi(\sigma) = D\sigma^{2-k^+}-A\alpha\sigma^{p^+-k^+}-B\beta.
\end{equation*}
Let
\begin{align*}
	\sigma^{\ast}:=\left[ \frac{D(2-k^+)}{A\alpha (p^+-k^+)}\right]^{\frac{1}{p^+-2}}.
\end{align*}
We then have
\begin{equation*}
	\psi(\sigma^{\ast}) = \max_{\sigma \geq 0}\psi(\sigma) >0,
\end{equation*}
provided that
\begin{equation*}
	B\beta <\left[\frac{D(2-k^+)}{A\alpha (p^+-k^+)}\right]^{\frac{2-k^+}{p^+-2}}\frac{D(p^+-2)}{(p^+-k^+)},
\end{equation*}
that is,
\begin{equation*}
	\alpha^{2-k^+}\beta^{p^+-2}\leq \left[\frac{D(2-k^+)}{A(p^+-k^+)}\right]^{2-k^+}\left(\frac{D(p^+-2)}{B(p^+-k^+)}\right)^{p^+-2}.
\end{equation*}
Moreover, since we are assuming
\begin{equation*}
	\alpha \leq \frac{D(2-k^+)}{A(p^+-k^+)},
\end{equation*}
we can readily check that $\sigma^\ast\geq 1$. Then, the first mountain pass property holds with $\rho =\sigma^{\ast}>0$ and $\delta =\phi(\sigma ^{\ast })>0$.

\noindent\textbf{Step 2.} Let us now prove the second mountain pass property. In order to do that, let us select $v_0\in E_{\lambda}$ such that
\begin{equation*}
	\left\Vert v_0\right\Vert_{\lambda }=1 \quad\text{ and }\quad \int_{\Omega}\left\vert v_0(x) \right\vert^{p(x)}\mathrm{d}x>0.
\end{equation*}
Then for all $\sigma \geq 1$, we get
\begin{align*}
	I_{\lambda}(\sigma v_0) &\leq \max \left(\frac{1}{q^-},\frac{1}{2}\right)\! \sigma^2\left\Vert v_0\right\Vert_{\lambda}^2-\int_{\Omega}\!\left(\frac{\alpha}{p(x)}\left\vert \sigma v_0(x)\right\vert^{p(x)}+\frac{\beta}{k(x)}\left\vert \sigma v_0(x)\right\vert^{k(x)}\right) \mathrm{d}x
	\\
	&\leq \max \left(\frac{1}{q^-},\frac{1}{2}\right)\!\sigma^2\left\Vert v_0\right\Vert_{\lambda}^2-\frac{\alpha\sigma^{p^-}}{p^+}\int_{\Omega}\left\vert v_0(x)\right\vert^{p(x)}\mathrm{d}x.
\end{align*}

Since $p^->2$, we can chose $\sigma\geq 1$ large enough such that $\left\Vert \sigma v_0\right\Vert_{\lambda}>\rho$ and $I_{\lambda}(\sigma v_0)<0$. Then, the second mountain pass property is satisfied by letting $e=\sigma v_0$.
\end{proof}

Let us now show that the functional $I_\lambda$ provides the $(PS)_c$ condition in $E_\lambda$. To this end, in the same spirit of Theorem \ref{Thes01}, let us define
\begin{equation*}
	c_{\lambda}=\inf_{\gamma \in \Gamma}\max_{0\leq \sigma \leq 1}I_{\lambda}(\gamma(\sigma)),
\end{equation*}
and
\begin{equation*}
	c(\Omega_0) =\inf_{\gamma \in \widetilde{\Gamma}}\;\max_{0\leq \sigma \leq 1}I_{\lambda}\mid_{\mathcal{H}_0^{s(.)}(\Omega_0)}(\gamma(\sigma)),
\end{equation*}
where:
\begin{itemize}
	\item the set $\Omega_0$ is the one provided by assumption \eqref{hypV2};
	\vspace{0.1cm}
	\item $I_{\lambda}\mid_{\mathcal{H}_0^{s(.)}(\Omega_0)}$ is a restriction of $I_{\lambda}$ on $\mathcal{H}_0^{s(.)}(\Omega_0)$;
	\vspace{0.1cm}
	\item $\Gamma =\{\gamma \in C^1([0,1];E_{\lambda}) \;|\; \gamma (0)=1 \text{ and }\gamma(1)=e\}$;
	\vspace{0.1cm}
	\item $\widetilde{\Gamma}=\{\gamma \in C^1([0,1];\mathcal{H}_0^{s(.)}(\Omega_0))\;|\; \gamma (0)=1 \text{ and } \gamma (1)=e\}.$
\end{itemize}

Clearly, $c(\Omega_0)$ is independent of $\lambda$. Moreover, note that, for all $u\in \mathcal{H}_0^{s(.)}(\Omega_0)$, we have
\begin{align*}
	I_{\lambda}\mid_{\mathcal{H}_0^{s(.)}(\Omega_0)}(u) =& \int_{\Omega_0}\int_{\Omega_0}\frac{1}{q(x,y)}\frac{\left\vert u(x)-u(y)\right\vert^{q(x,y)}}{\left\vert x-y\right\vert^{n+q(x,y) s(x,y)}}\,\mathrm{d}x\mathrm{d}y + \frac{\lambda}{2}\int_{\Omega_0}\!V( x)\left\vert u(x)\right\vert^2\mathrm{d}x
	\\
	&-\int_{\Omega_0}\left(\frac{\alpha}{p(x)}\left\vert u\right\vert^{p(x)} + \frac{\beta}{k(x)}\left\vert u\right\vert^{k(x)}\right) \mathrm{d}x,
\end{align*}

By the proof of Lemma \ref{Lemma1}, we can infer that $I_{\lambda}\mid_{\mathcal{H}_0^{s(.)}(\Omega_0)}$ satisfies the mountain pass proprieties of Theorem \ref{Thes01}. Since $\mathcal{H}_0^{s(.)}(\Omega_0) \subset E_{\lambda}$ for all $\lambda >0$, one has $0<\alpha \leq c_{\lambda}\leq c(\Omega_0)$ for all $\lambda >0$. Clearly, for all $\sigma \in [0,1]$, $\sigma e\in \widetilde{\Gamma }$. Thus, there exists $\C_0>0$ such that
\begin{equation}\label{15}
	c(\Omega_0)\leq \max_{0\leq \sigma \leq 1}I_{\lambda}(\sigma e)\leq \C_0< +\infty,
\end{equation}
since $p^->2$. Then,
\begin{equation*}
	0<\delta \leq c_{\lambda }\leq c(\Omega_0)<\C_0
\end{equation*}
for all $\lambda >0$. By Lemma \ref{Lemma1} and Theorem \ref{The1}, we get that for all $\lambda >0$, there exists $\{u_j\}_j\subset E_{\lambda}$ such that
\begin{equation}\label{16}
	I_{\lambda}(u_j)\rightarrow c_{\lambda}>0 \quad\text{ and }\quad I_{\lambda}^{\prime}(u_j)\rightarrow 0, \quad\text{ as }j\rightarrow +\infty.
\end{equation}
Hence, $\{u_j\}_j$ is a $(PS)_{c_\lambda}$ sequence.

\begin{lemma}\label{Lemma3}
Under the assumptions \eqref{hypPbis}, \eqref{hypS2}, \eqref{hypV1}, \eqref{hypV2}, \eqref{hypK1} and \eqref{hypAlpha}, the sequence $\{u_j\}_j$ given by \eqref{16} is bounded in $E_{\lambda}$ for all $\lambda >0$.
\end{lemma}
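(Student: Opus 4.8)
The plan is to exploit the standard Ambrosetti--Rabinowitz-type trick: estimate a suitable linear combination of $I_\lambda(u_j)$ and $\langle I_\lambda'(u_j),u_j\rangle_\lambda$ so that the dominant growth term (here the $p(x)$-nonlinearity, whose exponent $p^->2$ is the largest in the problem) controls the norm $\|u_j\|_\lambda$. Concretely, I would compute $I_\lambda(u_j)-\tfrac{1}{p^-}\langle I_\lambda'(u_j),u_j\rangle_\lambda$ and use the structure of the functional. For the leading nonlocal term, the factor $1/q(x,y)$ multiplying $|u(x)-u(y)|^{q(x,y)}$ combines with the $-1/p^-$ multiple of the same integrand raised to the power $q(x,y)$ coming from $\langle L_1(u_j),u_j\rangle_\lambda$; since $q^+<p^-$ by \eqref{hypQ2} (and hence $1/q(x,y)-1/p^-\geq 1/q^+-1/p^->0$), this difference has a strictly positive coefficient in front of the Gagliardo energy. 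The same cancellation applied to the $\alpha$-term leaves $\alpha(1/p^- - 1/p(x))\int|u_j|^{p(x)}$, which is nonnegative because $p(x)\ge p^-$, so this term can simply be discarded.

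The only term requiring care is the $\beta$-nonlinearity. After the subtraction it produces a contribution proportional to $\beta\int_\Omega\left(\tfrac{1}{k(x)}-\tfrac{1}{p^-}\right)|u_j|^{k(x)}\,\mathrm{d}x$, and since $k^+<2<p^-$ by \eqref{hypK1} the bracket is positive, giving a term of the wrong sign that must be absorbed. Here I would estimate $\int_\Omega|u_j|^{k(x)}\,\mathrm{d}x$ from above using \eqref{05} and the continuous embedding $\mathcal{H}_0^{s(.)}(\Omega)\hookrightarrow L^{k(.)}(\Omega)$ of Proposition \ref{Thes2}, obtaining a bound of the form $\C\max(\|u_j\|_\lambda^{k^-},\|u_j\|_\lambda^{k^+})$. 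Since $k^+<2$, and the positive lower bound coming from the kinetic/potential part grows like $\|u_j\|_\lambda^{\min(q^-,2)}$ with an exponent strictly exceeding $k^+$, this subcritical term is of lower order and can be absorbed into the leading positive contribution, up to a constant.

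Putting these pieces together, I obtain an inequality of the schematic form
\begin{equation*}
	c_\lambda + o(1)\|u_j\|_\lambda + C \geq D'\,\|u_j\|_\lambda^{\,\min(q^-,2)} - C'\,\|u_j\|_\lambda^{\,k^+},
\end{equation*}
where $D'>0$ depends on $q^+,p^-$ and the embedding constants, the $o(1)\|u_j\|_\lambda$ on the left comes from $I_\lambda'(u_j)\to 0$ (so $|\langle I_\lambda'(u_j),u_j\rangle_\lambda|\le \|I_\lambda'(u_j)\|\,\|u_j\|_\lambda=o(1)\|u_j\|_\lambda$), and I have used that $I_\lambda(u_j)\to c_\lambda$ is bounded. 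Because the right-hand side grows superlinearly (with leading exponent $\min(q^-,2)>1$, and in any case the $\|u_j\|_\lambda^{\min(q^-,2)}$ term dominates the $\|u_j\|_\lambda^{k^+}$ term since $k^+<2$ and $k^+<q^-$ is guaranteed once one checks $k^+<2\le$ the relevant exponent), an unbounded subsequence $\|u_j\|_\lambda\to+\infty$ would force the right-hand side to diverge faster than the left, a contradiction. Hence $\{u_j\}_j$ is bounded in $E_\lambda$.

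The main obstacle I anticipate is not any single estimate but rather the bookkeeping of the variable exponents: one must consistently separate the regimes $\|u_j\|_\lambda\le 1$ and $\|u_j\|_\lambda\ge 1$ when invoking \eqref{05}, since there the modular is controlled by the max (resp.\ min) of the two powers, and one must verify that the leading positive term genuinely dominates the $k^+$-power term uniformly. The positivity of the coefficient $1/q^+-1/p^->0$, which rests essentially on hypothesis \eqref{hypQ2}, is the structural fact that makes the whole argument work.
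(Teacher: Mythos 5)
Your overall strategy coincides with the paper's proof of this lemma: compute $I_\lambda(u_j)-\tfrac{1}{p^-}\langle I_\lambda'(u_j),u_j\rangle_\lambda$, exploit $1/q(x,y)-1/p^-\geq 1/q^+-1/p^->0$ (which rests on \eqref{hypQ2}), discard the $\alpha$-term since $p(x)\geq p^-$, and control the $\beta$-term through \eqref{05} and the embedding of Proposition \ref{Thes2}. The gap is in your final domination step. You need the leading positive term, which (as you correctly note) carries the exponent $\min(q^-,2)$ once the modular--norm inequality is applied for $\Vert u_j\Vert_\lambda\geq 1$, to dominate $\Vert u_j\Vert_\lambda^{k^+}$; you justify this by asserting that $k^+<q^-$ ``is guaranteed'', but it is not: the hypotheses only give $q^->1$ in \eqref{hypQ2} and $k^+<2$ in \eqref{hypK1}, so for instance $q^-=1.1$, $k^+=1.9$, $p^-=3$ is admissible, and then $\min(q^-,2)=q^-<k^+$. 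In that regime the right-hand side of your schematic inequality tends to $-\infty$ along any subsequence with $\Vert u_j\Vert_\lambda\to+\infty$, the inequality is trivially satisfied, and no contradiction is produced. As written, your argument proves the lemma only under the additional restriction $q^->k^+$.

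For comparison, the paper writes the kinetic lower bound as $\left(\min\left(\tfrac12,\tfrac{1}{q^+}\right)-\tfrac{1}{p^-}\right)\Vert u_j\Vert_\lambda^2$, i.e.\ with exponent $2$, after which domination of the exponents $k^\pm<2$ is automatic; that quadratic bound is inherited from the constant case $q\equiv 2$ and, in view of \eqref{05}, is only legitimate when $q^-\geq 2$, so your exponent $\min(q^-,2)$ is in fact the honest one --- you have exposed a real delicacy, but your resolution of it is incorrect. A way to close the gap without ever comparing $q^-$ and $k^+$: since $k^+<2<p^-$, Young's inequality gives $|u_j|^{k(x)}\leq\varepsilon|u_j|^{p(x)}+C_\varepsilon$ with $C_\varepsilon$ uniform in $x$; then use the identity $\alpha\int_\Omega|u_j|^{p(x)}\,\mathrm{d}x=\varrho(u_j)+\lambda\int_\Omega V|u_j|^2\,\mathrm{d}x-\beta\int_\Omega|u_j|^{k(x)}\,\mathrm{d}x-\langle I_\lambda'(u_j),u_j\rangle_\lambda$, where $\varrho$ denotes the Gagliardo modular, to trade the term $\varepsilon\int_\Omega|u_j|^{p(x)}\,\mathrm{d}x$ back for the kinetic part, which absorbs it once $\varepsilon$ is small relative to $\min\left(\tfrac12,\tfrac1{q^+}\right)-\tfrac1{p^-}$. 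What remains is an inequality of the form $c\,\Vert u_j\Vert_\lambda^{q^-}\leq C+o(1)\Vert u_j\Vert_\lambda$, which is incompatible with $\Vert u_j\Vert_\lambda\to+\infty$ because $q^->1$; this yields the boundedness claimed in the lemma under the stated hypotheses.
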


\begin{proof}
Using the H\"older inequality, by \eqref{hypAlpha}, we obtain
\begin{align}\label{17}
	c_{\lambda} +& \; o(1) \geq I_{\lambda}(u_j)-\frac{1}{p^-} \langle I_{\lambda}^{\prime}(u_j),u_j\rangle \notag
	\\
	=&\iint_{\mathbb{R}^{2n}}\frac{1}{q(x,y)}\frac{\left\vert u_j(x) - u_j(y) \right\vert^{q(x,y)}}{\left\vert x-y\right\vert^{n+q(x,y) s(x,y)}}\, \mathrm{d}x\mathrm{d}y + \frac{\lambda}{2}\int_{\Omega}V(x) \left\vert u_j(x) \right\vert^2\mathrm{d}x - \frac{1}{p^-} \left\Vert u_j\right\Vert_{\lambda}^2  \notag
	\\
	&-\int_{\Omega}\left( \alpha \left(\frac{1}{p(x)} - \frac{1}{p^-}\right) \left\vert u_j(x)\right\vert^{p(x)} + \beta \left(\frac{1}{k(x)}-\frac{1}{p^-}\right) \left\vert u_j(x)\right\vert^{k(x)}\right) \mathrm{d}x  \notag
	\\
	\geq& \left(\min \left(\frac{1}{2},\frac{1}{q^+}\right)-\frac{1}{p^-}\right) \left\Vert u_j\right\Vert_{\lambda}^2 \notag 
	\\
	&-\beta \left(\frac{1}{k^-}-\frac{1}{p^-}\right) \max \left(\C_k^{k^-}\left\Vert u_j\right\Vert_{\lambda}^{k^-},\C_k^{k^+}\left\Vert u_j\right\Vert_{\lambda }^{k^+}\right).
\end{align}

Arguing by contradiction, we assume that $\{u_j\}_j$ is not bounded in $\mathcal{H}_0^{s(.)}(\Omega)$. Then there exists a subsequence, still denoted by $\{u_j\}_j$, such that $\left\Vert u_j\right\Vert_{\lambda}\rightarrow +\infty$ as $j\rightarrow +\infty$. Hence, by \eqref{17}, we have
\begin{equation*}
	\frac{c_{\lambda} + o(1)}{\left\Vert u_j\right\Vert_{\lambda}^2}\geq \left(D-\frac{1}{p^-}\right) -\beta \left(\frac{1}{k^-}-\frac{1}{p^-}\right) \max \left(\C_k^{k^-}\left\Vert u_j\right\Vert_{\lambda}^{k^- -2},\C_k^{k^+}\left\Vert u_j\right\Vert_{\lambda}^{k^+ -2}\right)
\end{equation*}
which yields $2\geq p^-$ or $q^+\geq p^-$. This is a contradiction, thus $\{u_j\}_j$ is bounded in $E_{\lambda}$ for all $\lambda >0$.
\end{proof}

\begin{lemma}\label{Lemma4}
Assume that \eqref{hypPbis}, \eqref{hypS2}, \eqref{hypV1}-\eqref{hypV2}, \eqref{hypK1} and \eqref{hypAlpha} hold. Then $I_{\lambda}$ satisfies the $(PS)_c$ condition in $E_{\lambda}$ for all $c\in\mathbb{R}$ and $\lambda >0$.
\end{lemma}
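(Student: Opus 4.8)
The plan is to prove that $I_\lambda$ satisfies the $(PS)_c$ condition by taking an arbitrary $(PS)_c$ sequence $\{u_j\}_j$ and extracting a strongly convergent subsequence. First I would invoke Lemma \ref{Lemma3} (whose proof only uses $I_\lambda(u_j)\to c$, $I_\lambda'(u_j)\to 0$ and the structural hypotheses, not the specific value $c=c_\lambda$) to conclude that $\{u_j\}_j$ is bounded in $E_\lambda$. Since $E_\lambda$ is a reflexive Banach space (the underlying spaces are built on uniformly convex variable-exponent Lebesgue spaces), boundedness yields, up to a subsequence, a weak limit $u_j\rightharpoonup u$ in $E_\lambda$. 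By the compact embedding $\mathcal{H}_0^{s(.)}(\Omega)\hookrightarrow L^{r(.)}(\Omega)$ from Proposition \ref{Thes2}, applied with $r=p(.)$ and $r=k(.)$ (both admissible by \eqref{hypPbis} and \eqref{hypK1}), we get strong convergence $u_j\to u$ in $L^{p(.)}(\Omega)$ and in $L^{k(.)}(\Omega)$, together with pointwise a.e.\ convergence along a further subsequence.

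The next step is to control the nonlinear terms. Writing $\langle I_\lambda'(u_j),u_j-u\rangle = \langle L_1(u_j),u_j-u\rangle - \int_\Omega(\alpha|u_j|^{p(x)-2}u_j+\beta|u_j|^{k(x)-2}u_j)(u_j-u)\,\mathrm{d}x$, I would show that the left-hand side tends to $0$ (since $I_\lambda'(u_j)\to 0$ in $E_\lambda^\ast$ and $\{u_j-u\}_j$ is bounded) and that the nonlinear integral also tends to $0$. For the latter, the generalized H\"older inequality of Theorem \ref{Holder1} bounds $\int_\Omega |u_j|^{p(x)-1}|u_j-u|\,\mathrm{d}x$ by $\||u_j|^{p(x)-1}\|_{L^{p'(.)}}\|u_j-u\|_{L^{p(.)}}$; the first factor stays bounded because $\{u_j\}_j$ is bounded in $L^{p(.)}$ (via \eqref{05}), while the second factor vanishes by the strong $L^{p(.)}$-convergence. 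The same argument handles the $\beta$-term in $L^{k(.)}$. Consequently $\langle L_1(u_j),u_j-u\rangle\to 0$.

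With this in hand, the heart of the matter is to promote weak convergence to strong convergence using the monotonicity structure of the operator $L_1$. I would consider the nonnegative quantity
\begin{align*}
	\langle L_1(u_j)-L_1(u),\,u_j-u\rangle
\end{align*}
and show it tends to $0$. The potential term $\lambda\int_\Omega V(u_j-u)^2\,\mathrm{d}x$ is nonnegative, and the Gagliardo double-integral part is governed by the elementary inequality $(|a|^{q-2}a-|b|^{q-2}b)(a-b)\geq 0$ for $q>1$, valid pointwise for each exponent $q(x,y)$. Since $\langle L_1(u),u_j-u\rangle\to 0$ by weak convergence and $\langle L_1(u_j),u_j-u\rangle\to 0$ by the previous step, the whole difference vanishes. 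Invoking the Simon-type inequalities for the $q$-Laplacian (the quantitative lower bounds $(|a|^{q-2}a-|b|^{q-2}b)(a-b)\geq c_q|a-b|^{q}$ for $q\geq 2$, and the analogous form for $1<q<2$) applied with the measure $|x-y|^{-n-q(x,y)s(x,y)}\,\mathrm{d}x\mathrm{d}y$, I would deduce that the modular $\iint_{\mathbb{R}^{2n}}\frac{|(u_j-u)(x)-(u_j-u)(y)|^{q(x,y)}}{|x-y|^{n+q(x,y)s(x,y)}}\,\mathrm{d}x\mathrm{d}y\to 0$, which by the modular-norm relation of \eqref{05} forces $[u_j-u]_{q(.),s(.),\Omega}\to 0$, i.e.\ $\|u_j-u\|_\lambda\to 0$, completing the proof.

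I expect the main obstacle to be the last step: the variable exponent $q(x,y)$ crosses between the regimes $q<2$ and $q\geq 2$, so no single Simon inequality applies uniformly, and the pointwise convexity estimates must be patched together over the sets $\{q(x,y)\geq 2\}$ and $\{q(x,y)<2\}$ with care, then reassembled into a modular estimate rather than a norm estimate. Handling the variable-exponent modular/norm conversion \eqref{05} correctly — and ensuring the constants do not degenerate as $q(x,y)$ ranges over $[q^-,q^+]$ — is the delicate part; everything preceding it is a fairly standard compactness-plus-monotonicity argument.
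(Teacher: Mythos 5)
Your proposal is correct and follows essentially the same route as the paper: boundedness via Lemma \ref{Lemma3} (whose argument indeed applies to any $(PS)_c$ sequence), weak convergence of a subsequence plus the compact embeddings of Proposition \ref{Thes2} into $L^{p(.)}(\Omega)$ and $L^{k(.)}(\Omega)$, vanishing of the nonlinear terms by H\"older, and then the monotonicity of $L_1$ to upgrade weak to strong convergence. In fact your treatment of the final step is more careful than the paper's, which simply rewrites $\langle L_1(u_j)-L_1(u_0),u_j-u_0\rangle$ as $(u_j-u_0,u_j-u_0)_{\lambda}$ and asserts the conclusion — an identification that is literally valid only when $q\equiv 2$ — whereas you correctly flag that Simon-type inequalities, patched over the regions $\{q(x,y)\geq 2\}$ and $\{q(x,y)<2\}$ and combined with the modular--norm relation \eqref{05}, are what is actually needed to pass from $\langle L_1(u_j)-L_1(u_0),u_j-u_0\rangle\to 0$ to $\left\Vert u_j-u_0\right\Vert_{\lambda}\to 0$.
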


\begin{proof}
Let $\{u_j\}_j$ be a $(PS)_c$ sequence with $c<\C_0$, where $\C_0$ is te constant introduced in \eqref{15}. By Lemma \ref{Lemma3}, $\{u_j\}_j$ is bounded in $E_{\lambda}$ and there exists $\C>0$ such that $\left\Vert u_j\right\Vert_{\lambda}\leq \C$. Thus, there exist a subsequence of $\{u_j\}_j$, still denoted by $\{u_j\}_j$, and $u_0$ in $E_{\lambda}$ such that as $j\to +\infty$
\begin{equation*}
	\begin{array}{ll}
		u_j \rightharpoonup u_0 &\text{weakly in } E_{\lambda},
		\\[4pt]
		u_j \rightharpoonup u_0 &\text{a.e. in }\mathbb{R},
		\\
		\left\vert u_j\right\vert^{p(.)-2}u_j \rightharpoonup \left\vert u_0\right\vert^{p(.)-2}u_0 & \text{weakly in }L^{\frac{p(.)}{p(.)-1}}(\Omega)
	\end{array}
\end{equation*}

Our aim now is to prove that $u_j\rightarrow u_0$ strongly in $E_{\lambda}$. By Proposition \ref{Thes2}, we obtain $u_j\rightarrow u_0$ in $L^{p(.)}(\Omega)$ and $L^{k(.)}(\Omega)$, respectively. Thus
\begin{equation*}
	\lim_{j\rightarrow +\infty}\int_{\Omega}\left\vert u_j(x)-u_0\right\vert^{p(x)}\mathrm{d}x=0
\end{equation*}
and
\begin{equation*}
	\lim_{j\rightarrow +\infty} \int_{\Omega}\left\vert u_j(x) - u_0\right\vert^{k(x)}\mathrm{d}x=0.
\end{equation*}
It follows from \eqref{16} that
\begin{align*}
	o(1) =&\; \langle I_{\lambda}^{\prime}(u_j)-I_{\lambda}^{\prime}(u_0),u_j-u_0\rangle
	\\
	=&\; (u_j-u_0,u_j-u_0)_{\lambda}-\alpha \int_{\Omega}\left(\left\vert u_j\right\vert^{p(x)-2}u_j-\left\vert u_0\right\vert^{p(x)-2}u_0\right)(u_j-u_0)\,\mathrm{d}x
	\\
	&-\beta \int_{\Omega}\left(\left\vert u_j\right\vert^{k(x)-2}u_j-\left\vert u_0\right\vert^{k(x)-2}u_0\right)(u_j-u_0)\,\mathrm{d}x,
\end{align*}
which means that
\begin{equation*}
	\lim_{j\rightarrow +\infty}\left\Vert u_j-u_0\right\Vert_{\lambda}=0.
\end{equation*}
This completes the proof.
\end{proof}

\begin{proof}[Proof of Theorem \ref{The1}]
	
First of all, from Lemmas \ref{Lemma1}-\ref{Lemma3} and Theorem \ref{Thes01}, we deduce that for all $\lambda>0$ there exists a $(PS)_{\alpha\lambda}$ sequence $\{u_j\}_j$ for $I_{\lambda}$ on $E_{\lambda}$.

Now, by Lemma \ref{Lemma3} and the fact that $0<c_{\lambda}<c(\Omega_0)<\C_0$ for all $\lambda >0$, where $\C_0$ is the constant introduced in \eqref{15}, we obtain that there exists a subsequence of $\{u_j\}_j$, still denoted by $\{u_j\}_j$, and $u_{\lambda}^{(1)}\in E_{\lambda}$ such that $u_j\rightarrow u_{\lambda}^{(1)}$ strongly in $E_{\lambda}$. Moreover, $I_{\lambda}(u_j) = c_{\lambda}\geq\delta$ and $u_{\lambda}^{(1)}$ is a solution of \eqref{mainProblem}.

Next, we show that system \eqref{mainProblem} has another solution. For this purpose, let us define
\begin{equation*}
	\widetilde{c}_{\lambda}:=\inf \left\{I_{\lambda}(u)\;|\;u\in \overline{B}_{\rho}\right\},
\end{equation*}
where $B_{\rho}=\{u\in E_{\lambda}\;|\;\left\Vert u\right\Vert_{\lambda}<\rho\}$ and $\rho >0$ is given by Lemma \ref{Lemma1}. Moreover, let $w_0\in \mathcal{H}_0^{s(.)}(\Omega)\subset E_\lambda$ be such that
\begin{align*}
	\int_{\Omega}\left\vert w_0\right\vert^{k(x)}\,\mathrm{d}x>0.
\end{align*}
We can readily check that
\begin{equation*}
	I_{\lambda}(\tau w_0)\leq \tau^2\max \left(\frac{1}{q^-},\frac{1}{2}\right) \left\Vert w_0\right\Vert_{\lambda}^2-\frac{\beta \tau^{k^+}}{k^+}\int_{\Omega}\left\vert w_0\right\vert^{k(x)}\mathrm{d}x.
\end{equation*}
Hence, since by \eqref{hypK1} we have $k^+<2$, for all $\tau <\tau_0$ with
\begin{align*}
	\tau_0 := \left[\frac{\beta}{k^+ \left\Vert w_0\right\Vert_{\lambda}^2}\left(\int_{\Omega}\left\vert w_0\right\vert^{k(x)}\,\mathrm{d}x\right)\left(\max \left(\frac{1}{q^-},\frac{1}{2}\right)\right)^{-1}\right]^{\frac{1}{2-k^+}}
\end{align*}
we immediately have $I_{\lambda}(\tau w_0)<0$ for all $\lambda>0$.

On the other hand, by taking $\tau\leq \rho\left\Vert w_0\right\Vert_{\lambda}^{-1}$, we also have that $\tau w_0\in B_{\rho}$. Hence, if $\tau\leq\min(\tau_0,\rho\left\Vert w_0\right\Vert_{\lambda}^{-1})$, there exists $z_0=\tau w_0\in B_{\rho}$ such that $I_{\lambda}(z_0)<0$ for all $\lambda >0$. This clearly implies that $\widetilde{c}_{\lambda}<0$ for all $\lambda >0$.

It then follows from Lemma \ref{Lemma1} and the Ekeland variational principle (see \cite{Ekeland}) applied in $B_{\rho}$, that there exists a sequence $\{u_j\}_j$ such that
\begin{equation}\label{21}
	\widetilde{c}_{\lambda}\leq I_{\lambda}(u_j)\leq \widetilde{c}_{\lambda}+\frac{1}{j},
\end{equation}
and
\begin{equation}\label{22}
	I_{\lambda }(v)\geq I_{\lambda}(u_j)-\frac{1}{j}\left\Vert u_j-v\right\Vert_{\lambda}
\end{equation}
for all $v\in B_{\rho}$. 

Now we show that $\left\Vert u_j\right\Vert_{\lambda}<\rho$ for $n$ sufficiently large. Arguing by contradiction, we
assume that $\left\Vert u_j\right\Vert_{\lambda }=\rho $ for infinitely many $j$. Without loss of generality, we may assume that $\left\Vert u_j\right\Vert_{\lambda}=\rho$ for any $j\in \mathbb{N}$. From Lemma \ref{Lemma1}, we deduce that
\begin{equation*}
	I_{\lambda}(u_j)\geq \delta >0.
\end{equation*}

This, combined with \eqref{21}, implies that $\widetilde{c}_{\lambda}\geq \delta >0$, which contradicts $\widetilde{c}_{\lambda}<0$. Next we show that $I_{\lambda}^{\prime}(u_j)\rightarrow 0$ in $E_{\lambda}^{\ast}$. Set
\begin{equation*}
	w_j = u_j +\tau v, \text{ for all } v\in B_1:=\{v\in E_{\lambda}\;|\;\left\Vert v\right\Vert_{\lambda}=1\},
\end{equation*}
where $\tau >0$ small enough is such that $2\tau \rho +\tau^2\leq \rho^2-\left\Vert u_j\right\Vert_{\lambda}^2$ for fixed $j$ large. Then
\begin{equation*}
	\left\Vert w_j\right\Vert_{\lambda}^2 = \left\Vert u_j\right\Vert_{\lambda}^2 + 2\tau \rho \langle u_j,v\rangle_{\lambda} + \tau^2\leq \left\Vert u_j\right\Vert_{\lambda}^2+2\tau \rho +\tau^2\leq \rho^2,
\end{equation*}
which implies that $w_j\in B_{\rho}$. Thus, from \eqref{22}, we obtain
\begin{equation*}
	I_{\lambda}(w_j)\geq I_{\lambda}(u_j)-\frac{1}{j}\left\Vert u_j-w_j\right\Vert_{\lambda},
\end{equation*}
that is
\begin{equation*}
	\frac{I_{\lambda}(u_j+\tau v)-I_{\lambda}(u_j)}{\tau}\geq -\frac{1}{j}.
\end{equation*}

Letting $\tau \rightarrow 0^+$, we get $\langle I_{\lambda}^{\prime}(u_j),v\rangle \geq -1/j$ for any fixed $j$ large. Similarly, choosing $\tau <0$ and $\left\vert \tau \right\vert$ small enough and repeating the procedure above, one can obtain that $\langle I_{\lambda}^{\prime}(u_j),v\rangle \leq 1/j$ for any fixed $j$ large. Thus,
\begin{equation*}
	\lim_{j\rightarrow +\infty}\sup_{v\in B_1}\left\vert \langle I_{\lambda}^{\prime}(u_j),v\rangle\right\vert =0,
\end{equation*}
which yields that $I_{\lambda}(u_j)\rightarrow 0$ in $E_{\lambda}^{\ast}$ as $j\rightarrow +\infty$. Therefore, $\{u_j\}_j$ is a $(PS)_{\widetilde{c}_{\lambda}}$ sequence for the functional $I_{\lambda}$. Using a similar discussion as in Lemma \ref{Lemma4}, there exists $u_{\lambda}^{(2)}\in E_{\lambda}$ such that $u_j\rightarrow u_{\lambda}^{(2)}$ in $E_{\lambda}$. Thus, we get a nontrivial solution $u^{(2)}$ of \eqref{mainProblem} satisfying
\begin{equation*}
	I_{\lambda}(u_{\lambda}^{(2)})\leq \zeta <0 \quad\text{ and }\quad \left\Vert u_{\lambda}^{(2)}\right\Vert_{\lambda}<\rho.
\end{equation*}
We therefore deduce that
\begin{equation*}
	I_{\lambda }(u_{\lambda}^{(2)})=\widetilde{c}_{\lambda}\leq \zeta <0<\delta <c_{\lambda}=I_{\lambda}(u_{\lambda}^{(1)}) \quad\text{ for all } \lambda >0,
\end{equation*}
which ends the proof.
\end{proof}

\section{Proof of Theorems \ref{The2} and \ref{The3}}\label{sec:proof2}

We give in this section the proof of the others two main results of this paper, namely Theorems \ref{The2} and \ref{The3}.

\begin{proof}[Proof of Theorem \ref{The2}]
For any sequence $\{\lambda_j\}_j$ such that $1\leq \lambda_j\rightarrow +\infty$ as $j\rightarrow +\infty$, let $u_j^{(i)}$ be the critical points of $I_{\lambda}$ obtained in Theorem \ref{The1} for $i=1,2$. Thus, we have
\begin{equation*}
	I_{\lambda}(u_{\lambda}^{(2)})\leq \zeta <0 \quad\text{ and }\quad \left\Vert u_{\lambda}^{(2)}\right\Vert_{\lambda }<\rho.
\end{equation*}
Hence, we deduce that
\begin{align}\label{25}
	&I_{\lambda_j}(u_j^{(2)})\leq \zeta <0<\delta <c_{\lambda_j}=I_{\lambda_j}(u_j^{(1)})<\C_0,
	\\
	&I_{\lambda_j}^{\prime}(u_j^{(2)})=I_{\lambda_j}^{\prime}(u_j^{(1)}), \notag
\end{align}
where $\C_0$ is the constant introduced in \eqref{15}, and
\begin{align}\label{26}
	I_{\lambda_j}\left(u_j^{(i)}\right) \geq&\; \min \left(\frac{1}{2},\frac{1}{q^+}\right) \left\Vert u_j^{(i)}\right\Vert_{\lambda_j}^2 -\int_{\Omega}\left(\frac{\alpha}{p(x)}\left\vert u_j^{(i)}\right\vert^{p(x)} + \frac{\beta}{k(x)}\left\vert u_j^{(i)}\right\vert^{k(x)}\right) \mathrm{d}x \notag
	\\
	\geq&\; \left(\min \left( \frac{1}{2},\frac{1}{q^+}\right) -\frac{1}{p^-}\right) \left\Vert u_j^{(i)}\right\Vert_{\lambda_j}^2 \notag
	\\
	&-\beta \left(\frac{1}{k^-}-\frac{1}{p^-}\right) \max \left(\C_k^{k^-}\left\Vert u_j^{(i)}\right\Vert_{\lambda_j}^{k^-},\C_k^{k^+}\left\Vert u_j^{(i)}\right\Vert_{\lambda_j}^{k^+}\right).
\end{align}
We then get from \eqref{25} and \eqref{26} that
\begin{equation*}
	\left\Vert u_j^{(i)}\right\Vert_{\lambda_j}\leq \C,
\end{equation*}
where $\C>0$ is independent of $\lambda_j$. Hence, we can suppose that $u_j^{(i)}\rightharpoonup u^{(i)}$ weakly in $\mathcal{H}_0^{s(.)}(\Omega)$ and $u_j^{(i)}\rightarrow u^{(i)}$ strongly in $L^{p(.)}(\Omega)$ and $L^{k(.)}(\Omega)$, respectively. By Fatous' lemma, we obtain
\begin{equation*}
	\int_{\Omega}V(x) \left\vert u^{(i)}(x) \right\vert^2\mathrm{d}x \leq \liminf_{j\rightarrow +\infty}\int_{\Omega}V(x) \left\vert u_j^{(i)}(x) \right\vert^2\mathrm{d}x \leq \liminf_{j\rightarrow +\infty}\frac{\left\Vert u_j^{(i)}\right\Vert_{\lambda_j}^2}{\lambda_j}=0.
\end{equation*}
Thus, $u^{(i)}=0$ a.e. in $\mathbb{R}^n\backslash V^{-1}(0)$ and $u^{(i)}\in \mathcal{H}_0^{s(.)}(\Omega_0)$ by \eqref{hypK1}. 

Similarly to the proof of Theorem \ref{The1}, we can now prove that $u^{(1)}$ and $u^{(2)}$ are two solutions of problem \eqref{23}. Indeed, it follows from \eqref{25}, $u^{(i)}=0$ a.e. in $\mathbb{R}^n\backslash V^{-1}(0)$ and the constants $\zeta,\delta$ are independent of $\lambda$ that
\begin{equation*}
	\max \left(\frac{1}{q^-},\frac{1}{2}\right) \left\Vert u^{(1)}\right\Vert_{\lambda}^2-\int_{\Omega_0}\frac{\alpha}{p(x)}\left\vert u^{(1)}\right\vert^{p(x)} \mathrm{d}x - \int_{\Omega_0}\frac{\beta}{k(x)}\left\vert u^{(1)}\right\vert^{k(x)}\mathrm{d}x\geq \delta >0
\end{equation*}
and
\begin{equation*}
	\min \left(\frac{1}{q^+},\frac{1}{2}\right) \left\Vert u^{(2)}\right\Vert_{\lambda}^2-\int_{\Omega_0}\frac{\alpha}{p(x)}\left\vert u^{(2)}\right\vert^{p(x)} \mathrm{d}x-\int_{\Omega_0}\frac{\beta}{k(x)}\left\vert u^{(2)}\right\vert^{k(x)}\mathrm{d}x\leq \zeta <0
\end{equation*}
which means that $u^i\neq 0$ and $u^1\neq u^2$. The proof is thus complete.
\end{proof}

We conclude this paper with the proof of Theorem \ref{The3}, providing the existence of infinitely many solutions of problem \eqref{23}. To this end, we will employ the following symmetric mountain pass theorem (see \cite[Theorem 2.2]{Colasuonno}).

\begin{theorem}\label{The4}
Let $X$ be a real infinite dimensional Banach space and $J\in C^1(X)$ a functional satisfying the $(PS)_c$ condition as well as the
following three properties:
\begin{itemize}
	\item[1.] $J(0)=0$ and there exist two constants $\rho,\delta >0$ such that $J(u)\geq \delta$ for all $u\in X$ with $\left\Vert u\right\Vert=\rho$.

	\item[2.] $J$ is even.

	\item[3.] For all finite dimensional subspaces $Y\subset X$ there exists $R=R(Y)>0$ such that $J(u)\leq 0$ for all $u\in X\backslash B_R(Y)$, where $B_R(Y)=\{u\in Y\;|\;\left\Vert u\right\Vert \leq R\}$.
\end{itemize}
Then $J$ possesses an unbounded sequence of critical values characterized by a mini-max argument.
\end{theorem}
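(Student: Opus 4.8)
The plan is to prove this abstract theorem by the classical Lusternik--Schnirelmann/Krasnoselskii genus minimax scheme in its equivariant (even) form. First I would recall the Krasnoselskii genus $\gamma(A)$, defined for closed symmetric sets $A=-A$ with $0\notin A$ as the least integer $m$ for which an odd continuous map $A\to\mathbb{R}^m\setminus\{0\}$ exists (with $\gamma(A)=+\infty$ if none exists and $\gamma(\emptyset)=0$). The properties I will invoke are: the mapping property $\gamma(A)\le\gamma\big(\overline{h(A)}\big)$ for odd continuous $h$; subadditivity; the fact that $\gamma$ of the boundary of a bounded symmetric neighbourhood of $0$ in an $m$-dimensional space equals $m$, so that $\gamma(S_\rho)=+\infty$ since $X$ is infinite dimensional; and the continuation property, that a compact symmetric set has a symmetric neighbourhood of the same genus.

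Since $J$ is even we have $J'(0)=0$, so the origin is a critical point with $J(0)=0$, while property~1 gives $J\ge\delta$ on the reference sphere $S_\rho=\{\|u\|=\rho\}$. To build a minimax that ``sees'' this sphere and is \emph{not} collapsed to $0$ by the small infinite-dimensional spheres, I would not take the naive family of all symmetric sets of genus $\ge j$; instead I would use Benci's pseudo-index relative to $S_\rho$. Let $\mathcal{G}$ be the class of odd homeomorphisms $h$ of $X$ that fix a neighbourhood of $0$ and satisfy $J(h(u))\le J(u)$, and set $i^{*}(A)=\min_{h\in\mathcal{G}}\gamma\big(h(A)\cap S_\rho\big)$. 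Define for $j\in\mathbb{N}$
$$c_j=\inf_{i^{*}(A)\ge j}\ \sup_{u\in A}J(u).$$
Because $\mathrm{id}\in\mathcal{G}$, every admissible $A$ (with $i^{*}(A)\ge 1$) meets $S_\rho$, whence $\sup_A J\ge\inf_{S_\rho}J\ge\delta$; thus $c_j\ge\delta>0$ for all $j$. Monotonicity of $i^{*}$ gives $c_j\le c_{j+1}$, and $\gamma(S_\rho)=+\infty$ guarantees admissible sets of arbitrarily large pseudo-index, so each $c_j$ is finite.

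Next I would show $c_j\to+\infty$, and this is where property~3 enters. The idea is a genus-counting argument: if $c_j\le M$ for infinitely many $j$, one extracts symmetric sets of unboundedly large genus on which $J$ stays $\le M$; restricting to a finite-dimensional subspace $Y$ and invoking property~3 (which forces $J\le 0$ on $Y$ outside a ball, hence caps the relevant sublevel structure) one bounds the pseudo-index of such sets by $\dim Y$, a contradiction. The delicate point is to organize this so that the fixed-radius anchoring sphere $S_\rho$ keeps the values bounded below by $\delta$ while property~3 simultaneously drives $c_j$ to infinity; this estimate, together with the correct choice of the class $\mathcal{G}$, is the technical heart of the argument, and the step I expect to be the main obstacle.

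Finally I would show that each $c_j$ is a critical value and conclude. Assuming $c=c_j$ is a regular value, the $(PS)_c$ condition and the quantitative deformation lemma yield $\varepsilon\in(0,c/2)$ and an \emph{odd} homeomorphism $\eta\in\mathcal{G}$ with $\eta(\{J\le c+\varepsilon\})\subset\{J\le c-\varepsilon\}$, which, since $c-\varepsilon>0=J(0)$, deforms only in a region avoiding the origin, so symmetric sets remain in $X\setminus\{0\}$. Applying $\eta$ to a set $A$ with $i^{*}(A)\ge j$ and $\sup_A J\le c+\varepsilon$ produces $\eta(A)$ with $i^{*}(\eta(A))\ge j$ but $\sup_{\eta(A)}J\le c-\varepsilon$, contradicting the definition of $c_j$. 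Hence every $c_j$ is critical; since $c_j\nearrow+\infty$ they cannot take only finitely many distinct values, so $J$ possesses infinitely many arbitrarily large critical values, an unbounded sequence characterized by the above minimax, as claimed. (If consecutive values coincide, say $c_j=\dots=c_{j+p}=c$, the standard estimate $\gamma(K_c)\ge p+1$ forces the critical set $K_c$ to be infinite, which only strengthens the conclusion.)
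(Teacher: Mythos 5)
You should first be aware that the paper contains no proof of this statement: Theorem \ref{The4} is quoted verbatim from Colasuonno--Pucci \cite{Colasuonno} (it is the classical Ambrosetti--Rabinowitz symmetric mountain pass theorem, cf.\ Rabinowitz, Theorem 9.12), so your attempt can only be measured against the standard proof. Your choice of framework -- Krasnoselskii genus together with Benci's pseudo-index $i^{*}(A)=\min_{h\in\mathcal{G}}\gamma\bigl(h(A)\cap S_\rho\bigr)$ -- is a legitimate route, and the parts you carry out are sound: $c_j\geq\delta$ via $\mathrm{id}\in\mathcal{G}$, monotonicity, the equivariant deformation argument showing each $c_j$ is critical (the deformation is indeed the identity on the open neighbourhood $J^{-1}\bigl((-\infty,c-\bar\varepsilon)\bigr)\ni 0$ since $c-\bar\varepsilon>0=J(0)$, so it stays in $\mathcal{G}$), and the multiplicity estimate $\gamma(K_c)\geq p+1$ when levels coincide. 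You also tacitly corrected the paper's typo in property 3 (the condition should read $u\in Y\backslash B_R(Y)$, not $u\in X\backslash B_R(Y)$), which is the right reading.

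However, there is a genuine gap exactly where you flagged it, and the mechanism you sketch for filling it would not work. The divergence $c_j\to+\infty$ cannot be obtained by ``restricting to a finite dimensional subspace $Y$ and invoking property 3 to bound the pseudo-index by $\dim Y$'': the near-optimal minimax sets $A$ are in no way contained in a finite dimensional subspace, so property 3 says nothing about $J$ on them. The actual role of property 3 is the opposite one, namely to make the minimax classes nonempty with \emph{finite} levels: via the intersection lemma one shows $i^{*}\bigl(\overline{B_{R(Y)}}\cap Y\bigr)\geq\dim Y$ for compact sets on which $\sup J<+\infty$ -- note that your appeal to $\gamma(S_\rho)=+\infty$ alone does not produce admissible sets with finite supremum, so even the finiteness of $c_j$ is incompletely justified. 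The correct divergence argument (Rabinowitz, Proposition 9.33) runs differently: if $c_j\leq M$ for all $j$, then by the $(PS)$ condition the set $K=\{u\in X : J'(u)=0,\ \delta\leq J(u)\leq M\}$ is compact, symmetric, with $0\notin K$, hence $\gamma(K)=k<+\infty$, and by the continuation property $K$ has a closed symmetric neighbourhood $N$ with $\gamma(\overline{N})=k$; taking $j$ large so that $c_j>\bar c-\varepsilon$ with $\bar c=\lim_j c_j$, choosing $A$ with $i^{*}(A)\geq j+k$ and $\sup_A J\leq \bar c+\varepsilon$, excising $N$ (which lowers the index by at most $k$, by subadditivity and monotonicity of the genus), and applying a deformation pushing $\overline{A\setminus N}$ below level $\bar c-\varepsilon<c_j$ yields the contradiction. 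Without this excision-plus-uniform-deformation step -- which uses $(PS)$ over the whole interval $[\delta,M]$, not just at a single level -- the conclusion that the critical values form an unbounded sequence is not established, so your proposal, while correctly structured, is incomplete at its decisive point.
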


\begin{proof}[Proof of Theorem \protect\ref{The3}]

Obviously, Lemma \ref{Lemma1} still holds when considering functions $u\in \mathcal{H}_0^{s(.)}(\Omega_0)$. Let us now define the functional $I:\mathcal{H}_0^{s(.)}(\Omega_0) \rightarrow \mathbb{R}$ as
\begin{equation*}
	I(u) \leq \max \left(\frac{1}{q^-},\frac{1}{2}\right) \left\Vert u\right\Vert_{\lambda}^2-\int_{\Omega_0}\frac{\alpha}{p(x)}\left\vert u\right\vert^{p(x)}\mathrm{d}x-\int_{\Omega_0}\frac{\beta}{k(x)}\left\vert u\right\vert^{k(x)}\mathrm{d}x.
\end{equation*}

Clearly, $I\in C^1(\mathcal{H}_0^{s(.)}(\Omega_0))$ and the critical points of $I$ are the weak solutions of problem \eqref{23}.

Now we first due that for any finite dimensional subspace $\mathcal{W}$ of $\mathcal{H}_0^{s(.)}(\Omega_0)$, there exists $R_0=R_0(\mathcal{W})$ such that $I(u)<0$ for all $u\in \mathcal{H}_0^{s(.)}(\Omega_0) \backslash B_{R_0}(\mathcal{W})$, where $B_{R_0}(\mathcal{W})=\{u\in \mathcal{H}_0^{s(.)}(\Omega_0) \;|\; \left\Vert u\right\Vert_{\lambda}<R_0\}$.

Next, let $\mathcal{W}$ be a fixed finite dimensional subspace of $\mathcal{H}_0^{s(.)}(\Omega_0)$, for any $u\in \mathcal{W}$ such that $\left\Vert u\right\Vert_{\lambda}^2=1$. Thus, we get that for all $\sigma \geq 1$,
\begin{align*}
	I(\sigma u) &\leq \max \left( \frac{1}{q^-},\frac{1}{2}\right) \sigma^2\left\Vert u\right\Vert_{\lambda}^2-\int_{\Omega_0}\frac{\alpha}{p(x)}\left\vert \sigma u\right\vert^{p(x)}\mathrm{d}x-\int_{\Omega_0}\frac{\beta}{k(x)}\left\vert \sigma u\right\vert^{k(x)}\mathrm{d}x
	\\
	&\leq \max \left(\frac{1}{q^-},\frac{1}{2}\right) \sigma^2\left\Vert u\right\Vert_{\lambda}^2-\frac{\alpha \sigma^{p^-}}{p^+}\min\left( \left\Vert u\right\Vert_{L^{p(.)}(\Omega_0)}^{p^-},\left\Vert u\right\Vert_{L^{p(.)}(\Omega_0)}^{p^+}\right).
\end{align*}

Note that there exists $\C_{\mathcal{W}}>0$ such that $\left\Vert u\right\Vert_{L^{p(.)}(\Omega_0)}\geq \C_{\mathcal{W}}\left\Vert u\right\Vert_{\lambda}$, because all norms are equivalent on the finite dimensional Banach space $\mathcal{W}$. Hence, since $p^->2$, we get
\begin{equation*}
	I(\sigma u) \leq \max \left(\frac{1}{q^-},\frac{1}{2}\right)\sigma^2\left\Vert u\right\Vert_{\lambda}^2-\frac{\alpha \sigma^{p^-}}{p^+}\min \left(\C_{\mathcal{W}}^{p^-},\C_{\mathcal{W}}^{p^+}\right) \rightarrow -\infty \quad\text{ as }\sigma \rightarrow +\infty.
\end{equation*}
Thus, as $R\rightarrow +\infty$,
\begin{equation*}
	\sup_{\underset{\left\Vert u\right\Vert_{\lambda}=R}{u\in \mathcal{H}_0^{s(.)}(\Omega_0)}} I(u) = \sup_{\underset{\left\Vert u\right\Vert_{\lambda}=1}{u\in \mathcal{H}_0^{s(.)}(\Omega_0)}} I(Ru)\rightarrow -\infty.
\end{equation*}

Therefore, there exists $R_0>0$ large enough such that $I(u)<0$ for all $u\in \mathcal{H}_0^{s(.)}(\Omega_0)$, with $\left\Vert u\right\Vert_{\lambda }=R$ and $R\geq R_0$. Thus the claim holds true.

Similarly to the proof of Lemma \ref{Lemma3}, one can show that $I$ satisfies the $(PS)_c$ condition for any $c\in\mathbb{R}$. Obviously, $I(0)=0$ and $I$ is an even functional. In conclusion, by Theorem \ref{The4}, there exists an unbounded sequence of solutions of problem \eqref{23}.
\end{proof}

\section{Conclusions and open problems}\label{sec:open_pb}

In this paper, we have considered the following second-order non-local elliptic equation with variable growth conditions driven by the variable-order fractional Laplace operator:
\begin{align*}
	\begin{cases} 
		(-\Delta)_{q(.)}^{s(.)}u +\lambda Vu = \alpha \left\vert u\right\vert^{p(.)-2}u+\beta \left\vert u\right\vert^{k(.)-2}u & \text{ in }\Omega 
		\\[7pt]
		u=0 & \text{ in }\mathbb{R}^n\backslash\Omega 
	\end{cases}
\end{align*}

Under suitable assumptions for the functions $q(.)$, $s(.)$, $V(.)$, $p(.)$ and $k(.)$, and on the parameters $\alpha$ and $\beta$, we employed the mountain pass category theorem and Ekeland's variational principle to obtain the existence of a least two different solutions for all $\lambda>0$. Moreover, we proved that these solutions converge to two solutions of a limit problem as $\lambda \rightarrow +\infty $, and we obtained the existence of infinitely many solutions for this limit problem. Our results generalize the ones previously obtained in \cite{Mingqi} in the case $q(.)\equiv q$ constant.

We now conclude this paper by presenting a small collection of open problems related to our work which may be of interest for future research.

\begin{itemize}
	\item[1.] A crucial assumptions for our results was that
	\begin{equation*}
		q^{\ast}(x):= \frac{nq(x,y)}{n-s(x,y)q(x,y)}> r(x)
	\end{equation*}
	for all $x\in\overline{\Omega}$ and some continuous function $r$ such that $r(x)>1$. It would be interesting to analyze what results we can get in the case in which $q^\ast(x_0) = r(x_0)$ for some $x_0\in \Omega$.
	
	\item[2.] It would be worth to investigate what happens if in \eqref{mainProblem} we replace the Dirichlet homogeneous exterior condition $u=0$ in $\mathbb{R}^n\backslash \Omega $ with the non-homogeneous one $u=g$ in $\mathbb{R}^{n}\backslash \Omega$, where $g$ satisfies some regularity property to be defined. Notice that this would correspond to a non-homogeneous boundary condition on $\partial\Omega$ in the local case of the Laplace operator.
	
	\item[3.] A final interesting open problem would be to study the existence and multiplicity of solutions for Schr\"odinger-Kirchhoff type models involving the operator $(-\Delta)_{q(.)}^{s(.)}$ with variable exponent nonlinearities, i.e. 
	\begin{equation*}
		\begin{cases}
			\mathcal M (-\Delta)_{q(.)}^{s(.)} u + \lambda Vu=\alpha \left\vert u\right\vert^{p(.)-2}u + \beta \left\vert u\right\vert^{k(.)-2}u & \text{ in }\Omega, 
			\\[7pt]
			u=0 & \text{ in }\mathbb{R}^n\backslash \Omega.
		\end{cases}		
	\end{equation*}
	with $\mathcal M = \mathcal M([u]_{q(.),s(.),\Omega})$. Such kind of problems have been recently analyzed in \cite{xiang} for the constant-exponent fractional $q$-Laplacian. Nevertheless, to the best of our knowledge, the case of the variable-exponent operator remains open.
\end{itemize}

\end{document}